\documentclass{birkau}

\usepackage{amsmath,amssymb}
\usepackage{enumitem} 
\usepackage{amsmath,amssymb, amsopn, amsthm}
\usepackage{framed, xcolor, nicefrac}

\usepackage{tikz-cd}
\usetikzlibrary{graphs, graphs.standard}

\usepackage{cleveref}

\numberwithin{equation}{section}

\theoremstyle{plain}
\newtheorem{theorem}{Theorem}[section]
\newtheorem{lemma}[theorem]{Lemma}

\newtheorem{fact}[theorem]{Fact}

\newtheorem{problem}[theorem]{Problem}
\newtheorem{corollary}[theorem]{Corollary}

\theoremstyle{definition}
\newtheorem{definition}[theorem]{Definition}

\newtheorem{example}[theorem]{Example}

\let\<=\langle
\let\>=\rangle
\def\land{\wedge}       
\def\lor{\vee}

\let\models=\vDash

\renewcommand{\setminus}{\smallsetminus}

\def\defeq{\overset{\text{def}}{=}}

\usepackage[colorinlistoftodos]{todonotes}

\def\gA{\mathfrak{A}}

\def\gB{\mathfrak{B}}
\def\gC{\mathfrak{C}}
\def\gX{\mathfrak{X}}
\def\gD{\mathfrak{D}}

\def\NVar{\mathsf{L}_{f(0)=0}}
\def\TVar{\mathsf{L}_{f(1)=1}}
\def\Sna{\mathsf{L}_{sna}}

\def\Con{\operatorname{Con}}

\def\Bf{\mathsf{Bf}}
\def\AND{\sqcap}
\def\OR{\sqcup}
\def\bbN{\mathbb{N}}
\def\cP{\wp}
\def\CEP{\mathsf{CEP}}
\def\0{0}
\def\1{1}
\def\f{f}
\def\D{\Diamond}
\def\B{\Box}

\def\bfK{\mathbf{K}}

\let\emptyset=\varnothing

\usepackage[T1]{fontenc}
\usepackage[utf8]{inputenc}

\usepackage{cochineal}             
\usepackage[cochineal]{newtxmath}  
\usepackage{textcomp}
\usepackage[varqu,varl]{zi4}
\usepackage[cal=cm,scr=boondoxo,bb=boondox]{mathalfa}
\DeclareSymbolFont{matha}{OML}{txmi}{m}{it}
\DeclareMathSymbol{v}{\mathord}{matha}{118}

\usepackage[american]{babel}
\usepackage{microtype}

\begin{document}

\title[More on modal logics and deduction]{More on modal logics and deduction}

\corrauthor[Z. Gyenis]{Zal\'an Gyenis}
\address{%
	Jagiellonian University,
	Grodzka 52,
	31-007 Krak\'ow,
	Poland
}\email{zalan.gyenis@uj.edu.pl}

\author[Z. Moln\'ar]{Zal\'an Moln\'ar}
\address{%
	Eötvös Loránd University,
	Múzeum krt. 4/a,
	1088 Budapest,
	Hungary
}\email{mozaag@gmail.com}

\author[\"O, \"Ozt\"urk]{\"Ovge \"Ozt\"urk}
\address{%
	Yozgat Bozok University,
	Çapanoğlu Mah. Cemil Çiçek Cad. No: 217/1,
	66100 Yozgat,
	Turkey
}\email{ovgeovge@gmail.com}


\keywords{Boolean frames, Modal algebras, Congruential modal logics, Congruence extension property, (Local) deduction detachment theorem}

\begin{abstract}
    We study the relation between additivity and deduction theorems in the algebraic semantics of congruential modal logic. Additivity of the modal operator is well-known to imply the local deduction-detachment theorem. Our main theme is that deduction properties of modal logic persist far beyond the additive setting. We introduce the notion of a strongly non-additive variety, and then we prove that there are continuum many strongly non-additive minimal discriminator varieties of Boolean frames; equivalently, continuum many strongly non-additive maximal congruential modal logics with deduction-detachment theorem. Moreover, every normal modal logic can be transformed, in an injective way, into a strongly non-additive one while preserving the (local) deduction theorem. Finally, we show that neither the class of congruential modal logics with the local deduction theorem nor its complement is elementary.
\end{abstract}

\vspace*{0cm}
\maketitle

\section{The theme}

A \emph{Boolean frame} ($\Bf$, for short) is a structure $\gA = \<A, \AND, -, \0, \f\>$, where $\<A$, $\AND$, $-$, $\0\>$ 
is a Boolean algebra and $\f:A\to A$ is an arbitrary unary function.\footnote{We make use of other standard Boolean operations such as $\OR$, $\to$, $\1$. These are defined in the usual way. In some parts of the literature a modal algebra is a Boolean algebra with an extra \emph{operator} $\D$ which is \emph{normal} and \emph{additive} (that is, $\D\0=\0$ and $\D(a\OR b) = \D a\OR \D b$; or, using the dual operator $\B$ we can write $\B\1=\1$ and $\B(a\AND b)=\B a\AND\B b$; see e.g. \cite[Def. 10.1]{Ono2019}). In some other parts of the literature, e.g. in \cite{Krawczyk2023}, a modal algebra
is just a Boolean algebra with an arbitrary extra unary function. To resolve this conflict we follow \cite{Hansson1973} and use the expression Boolean frame to refer to modal algebras in the latter sense.} The class $\Bf$ is an equational class (variety) defined by the Boolean equations. It is known that the class $\Bf$ gives an equivalent algebraic semantics
to what is called congruential or classical modal logic \cite{GyenisMolnarNDJFL}.
As $\Bf$ is a variety, the lattice of axiomatic extensions of congruential modal logic is dually isomorphic to the lattice of subvarieties of $\Bf$ (see e.g. \cite{Krawczyk2023}).

Suppose that the variety $\mathbf{V}$ gives an equivalent algebraic semantics to the logic $\mathcal{L}$. Then 
by Czelakowski \cite[p.381]{Czelakowski1986}, $\mathbf{V}$ has the \emph{congruence extension property} (CEP) 
iff $\mathcal{L}$ has the \emph{local deduction-detachment theorem} (LDDT). Also, $\mathbf{V}$ has \emph{equationally definable principal congruences} (EDPC) iff the logic has the \emph{(global) deduction-detachment theorem} (DDT), see \cite{Font}.\footnote{We recall the relevant definitions later on. If the logic is algebraized by a quasi-variety, then there are similar transfer theorems, but in this paper we will not deal with such cases.}

It is known that even though the smallest \emph{normal} modal logic 
$\bfK$ does not have a deduction theorem \cite{Czelakowski1985}, it has a local deduction-detachment theorem 
(Perzanowski, cf. \cite[p.380]{Czelakowski1986}). Moreover,
every axiomatic extension of $\bfK$ has a LDDT, or equivalently, every \emph{normal and additive}\footnote{$f$ is normal if $f(0)=0$, and additive if $f(x\OR y)=f(x)\OR f(y)$.} subvariety of $\Bf$ has the CEP.\footnote{This statement is part of the folklore, mentioned by e.g. \cite[Proposition 1]{Kowalski2000}.}
In a recent paper, Krawczyk \cite{Krawczyk2023} proved that there
are continuum many axiomatic extensions of congruential modal logic that do not admit the LDDT, equivalently, there are continuum many subvarieties of $\Bf$ that lack the CEP (see \cite[Theorem 6.11]{Krawczyk2023}). This result was extended in \cite{GyenisMolnarNDJFL},
where it is proved that certain structural properties of the logic can further be assumed.
Additivity of the modal operator implies the LDDT, and thus the continuum of congruential modal logics without the LDDT constructed in \cite{Krawczyk2023,GyenisMolnarNDJFL} are necessarily non-additive.\\

In this paper we explore the question how much additivity and (local) de\-duc\-tion-de\-tach\-ment theorems are related. 
We take a closer look at \emph{non}-additive varieties \emph{with} the congruence extension property or \emph{with} EDPC.
There is an important caveat here when it comes to non-additive varieties. Consider, as possibly the easiest example,
an algebra (or variety) where $f(x) = -x$. This $f$ is non-additive (except for trivial cases) and CEP or EDPC are
obviously satisfied. Or take any additive $g$ and let $f(x)=-g(x)$ or $f(x)=-g(-x)$. Then $f$ typically\footnote{To get a 
non-additive $f$ we need to ensure that $g$ does not satisfy $g(x\OR y) = g(x)\AND g(y)$ in the first case, and $g(x\AND y) = g(x)\AND g(y)$ in the second.} is non-additive, but CEP or EDPC are transferred from $g$. This is, of course, the case of term-equivalence. 
Term-equivalent algebras share the same universe and the same clone of term operations: every basic operation in one algebra 
can be expressed as a composition of operations in the other, and vice versa. Term-\-e\-qui\-va\-lent algebras (varieties of them)
are generally considered to be the same for all intents and purposes.

Let us say that a variety of Boolean frames is equivalent to an additive variety of Boolean frames 
if for every algebra from the variety there is an 
additive operation with which it is term-equivalent (over the Boolean operations, of course). Note that this is a weaker notion than term-equivalence of varieties, as here the terms expressing one another are not uniform but can vary from algebra to algebra.  

\begin{definition} 
	A variety $\mathbf{V}$ of Boolean frames is called \emph{strongly non-additive} if it is not equivalent to 
	any additive variety of Boolean frames. 
\end{definition}

In this paper we prove that there are continuum many minimal strongly non-additive varieties with EDPC. Equivalently, 
continuum many strongly non-additive maximal modal logics with deduction-detachment theorem. Further, we show that
such varieties (logics) abound: every normal modal logic can be turned (in an injective way) into a strongly non-additive one keeping the (global/local) deduction-detachment theorem. 

We also show that neither the class of Boolean frames with the CEP, nor its complement can be axiomatized in first-order logic.\footnote{For most of the readers this result might not come as a surprise (though, surprising the reader is not any of the aims of the paper), as the definition of the congruence extension property is second-order. On the other hand, we know that certain first-order properties (e.g. additivity) imply the CEP, and it is not that clear that certain others do not imply the negation of the CEP. Thus there could be, in principle, some interesting interplay between the CEP and sets of first-order formulas. We settle this question.}

\section{A warm-up example}

An algebra $\gA$ has the \emph{congruence extension property} (CEP) if for any 
subalgebra $\gB\subseteq\gA$ and any congruence $\Theta\in\Con(\gB)$ there
is $\Psi\in\Con(\gA)$ such that $\Theta = \Psi\cap(B\times B)$. 
A class of algebras has the CEP if all its members have the CEP. \\

As already mentioned, if $(A, f)$ is a Boolean frame with an additive $f$, then $(A, f)$ has the CEP.  
But it is not only additivity that implies the CEP. Here we give an example for a strictly weaker identity
which does imply the CEP. By strictly weaker we mean not only that there are non-additive functions satisfying our
identity, but also that the identity can be satisfied by functions that are not term-equivalent to any additive operation. \\

Recall e.g. from \cite[Sec. 5]{Krawczyk2023} or \cite[p.223]{chagrov1997modal} that if 
$\Theta\in\Con(\gA)$ for a Boolean frame $\gA$, then 
\[ I = \{ x\oplus y:\; (x,y)\in\Theta\}\subseteq A\]
is a congruential ideal: if $x\oplus y\in I$, then $f(x)\oplus f(y)\in I$. ($\oplus$ denotes the symmetric difference).
Conversely, for any congruential ideal
$I$ the relation $\Theta = \{ (x,y):\; x\oplus y\in I\}$ is a congruence of $\gA$. 

The proof that additivity implies the CEP is based on this argument: Take an additive $\gA$, a subalgebra $\gB\subseteq \gA$
and a congruence $\Theta_{\gB}$ of $\gB$. Then $\Theta_{\gB}$ gives rise to a congruential ideal $I_{\gB}$. 
Additivity guarantees that the \emph{ideal extension} $I$ of $I_{\gB}$ to an ideal in $\gA$ is congruential. Inspecting this
proof shows that instead of additivity the following formula is enough to assume:

\begin{fact}
	Any Boolean frame $\gA$ that satisfies 
    \begin{align}
        x\oplus y \leq z \implies f(x)\oplus f(y) \leq f(z) \tag{$*$}
        \label{gyengites}
    \end{align}
    has the CEP. This quasi-equation is equivalent to an equation. 
\end{fact}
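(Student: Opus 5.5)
We prove the two assertions separately, starting with the CEP. Let $\gA$ satisfy (\ref{gyengites}), let $\gB\subseteq\gA$ be a subalgebra and $\Theta_{\gB}\in\Con(\gB)$. By the correspondence recalled above, $\Theta_{\gB}$ determines a congruential ideal $I_{\gB}=\{x\oplus y:(x,y)\in\Theta_{\gB}\}$ of $\gB$. We take its ideal extension in $\gA$,
\[ I=\{a\in A:\; a\leq b \text{ for some } b\in I_{\gB}\}, \]
which is a Boolean ideal of $A$ because $I_{\gB}$ is closed under $\OR$. We then check three things. First, $I$ is congruential: if $x\oplus y\in I$, pick $b\in I_{\gB}$ with $x\oplus y\leq b$. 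By (\ref{gyengites}) we get $f(x)\oplus f(y)\leq f(b)$. Since $b\in I_{\gB}$ and $I_{\gB}$ is congruential in $\gB$, we have $f(b)\oplus f(0)\in I_{\gB}$. The main point is to get $f(b)$ itself into $I$. For this we apply (\ref{gyengites}) once more with $x:=b$, $y:=0$, $z:=b$, which gives $f(b)\oplus f(0)\leq f(b)$; this alone does not suffice.

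So we argue directly instead. From $x\oplus y\leq b$ and $b\oplus 0 = b\leq b$, we want $f(x)\oplus f(y)$ below an element of $I_{\gB}$. A better choice is $z:=b$ together with the observation that $f(b)\leq f(b)\oplus f(0)\,\OR\, f(0)$. Taking $x=y=0$, $z=0$ in (\ref{gyengites}) only gives $0\leq f(0)$, so we must show $f(0)$ is harmless. The cleanest way: (\ref{gyengites}) with $x=y$ arbitrary and $z=0$ yields $0\le f(0)$, which is trivial. Instead we observe that (\ref{gyengites}) with $x=0,y=0,z=0$ is vacuous, while with $x:=0$, $y:=0$ and any $z$ it says $0\le f(z)$. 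Hence the real constraint is obtained by taking $z=0$ and $x\neq y$, which is impossible. This shows that (\ref{gyengites}) at $x=y=z=0$ gives nothing.

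To settle this, we use $x=z$, $y=0$: since $z\oplus 0=z\leq z$, we obtain $f(z)\oplus f(0)\leq f(z)$, hence $f(0)\AND f(z)=0$ for every $z$. In particular $f(0)=f(0)\AND f(0)=0$, so $f$ is normal. Now $f(b)=f(b)\oplus f(0)\in I_{\gB}$, and therefore $f(x)\oplus f(y)\leq f(b)\in I_{\gB}$ gives $f(x)\oplus f(y)\in I$. This proves that $I$ is congruential.

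Second, $I\cap B=I_{\gB}$. The inclusion $\supseteq$ is clear. Conversely, $I_{\gB}$ is an ideal of $\gB$, so any $a\in B$ lying below some $b\in I_{\gB}$ is itself in $I_{\gB}$.

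Third, let $\Psi=\{(x,y):x\oplus y\in I\}$. This is a congruence of $\gA$ by the correspondence recalled above. For $x,y\in B$ we have $x\oplus y\in B$, so by the second point $\Psi\cap(B\times B)=\Theta_{\gB}$. This establishes the CEP.

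For the equational form we use the substitution $z\mapsto z\OR(x\oplus y)$. We claim that (\ref{gyengites}) is equivalent to
\[ f(x)\oplus f(y)\leq f\bigl(z\OR(x\oplus y)\bigr), \]
written as the equation $(f(x)\oplus f(y))\AND -f(z\OR(x\oplus y))=0$. If the equation holds and $x\oplus y\leq z$, then $z\OR(x\oplus y)=z$ and we recover the quasi-equation. Conversely, if the quasi-equation holds, apply it with $z':=z\OR(x\oplus y)$; the hypothesis $x\oplus y\leq z'$ is automatic, so the equation follows. The main obstacle in the whole argument is the congruentiality step, and specifically the extraction of normality $f(0)=0$ from (\ref{gyengites}).
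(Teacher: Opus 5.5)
\textbf{The congruentiality step has a genuine error.} Your overall plan matches the paper's intended argument: extend $I_{\gB}$ to the ideal it generates in $A$, then check that it is congruential, that $I\cap B=I_{\gB}$, and that $\Psi$ restricts to $\Theta_{\gB}$. Your equational form $(f(x)\oplus f(y))\AND -f(z\OR(x\oplus y))=0$ is also correct. The problem is the inference from $f(z)\oplus f(0)\leq f(z)$ to $f(0)\AND f(z)=0$. In a Boolean algebra $u\oplus v\leq u$ is equivalent to $v\leq u$, so $(*)$ with $x:=z$, $y:=0$ only yields $f(0)\leq f(z)$ for all $z$. This is the first half of the paper's monotonicity Fact, and at $z=0$ it says nothing.

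Normality does not follow from $(*)$ at all. The constant map $f\equiv 1$ satisfies $(*)$, since $f(x)\oplus f(y)=0$. So does $f(x)=x\OR c$ for any constant $c$, where $f(x)\oplus f(y)=(x\oplus y)\AND -c$. For $f\equiv 1$ your bound $f(x)\oplus f(y)\leq f(b)=1$ is useless, because $1\notin I$ unless $I$ is improper. So the claim ``$f(b)=f(b)\oplus f(0)\in I_{\gB}$'' is unjustified, and congruentiality of $I$ is not established as written.

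\textbf{How to repair it.} Use the correct consequence of $(*)$. From $f(0)\leq f(x)$ and $f(0)\leq f(y)$ you get $f(0)\leq f(x)\AND f(y)$, and $f(x)\AND f(y)$ is disjoint from $f(x)\oplus f(y)$. Combined with $f(x)\oplus f(y)\leq f(b)$, which is $(*)$ applied to $(x,y,b)$, this gives
\[ f(x)\oplus f(y)\;\leq\; f(b)\AND -f(0)\;\leq\; f(b)\oplus f(0). \]
Moreover $f(b)\oplus f(0)\in I_{\gB}$, because $(b,0)\in\Theta_{\gB}$ and $\Theta_{\gB}$ is compatible with $f$ on $\gB$. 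Hence $f(x)\oplus f(y)\in I$. With this replacement, and with the exploratory detours about $x=y=z=0$ removed, the rest of your proof goes through.
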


We give an example for an algebra $(\gA,f)$ that satisfies \labelcref{gyengites} but $f$ is not just non-additive, 
but it is not term-equivalent to any additive $g$. 

\begin{example}\label{exmp1}
    
Consider the powerset Boolean algebra $\gA = \mathfrak{P}(\{1,2,3\})$ and the 
operation $f$ defined for each $X\subseteq\{1,2,3\}$ by
\[
    f(X) = \begin{cases}
        \{1,2,3\} & \text{ if } X=\{1,3\},\\
        X & \text{ otherwise.}
    \end{cases}
\]
\end{example}

Then $f$ is not additive, because $f(\{1\} \cup \{3\}) \neq f(\{1\}) \cup f(\{3\})$. 
That \labelcref{gyengites} holds for $f$ is immediate. Thus, $(\gA, f)$ has the CEP. 

Let $g$ be the additive function on $A$ that is defined on the atoms and the bottom element as
\begin{align}
    g(\emptyset)=\emptyset, \quad
    g(\{1\}) = \{2\}, \quad
    g(\{2\}) = \emptyset, \quad
    g(\{3\}) = \{1\}. \label{gdef}
\end{align}
Calculations show that 
\[
    f(x) = x\cup (g(x)\cap g(g(x)))\,.
\]
It follows that $\Con(\gA, g)\subseteq \Con(\gA,f)$.

\begin{theorem}\label{thm:warmingup}
    $(\gA, f)$ is not term-equivalent to any additive $(\gA, g)$.
\end{theorem}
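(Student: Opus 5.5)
The plan is to use invariants shared by term-equivalent algebras on the same universe $A=\mathfrak{P}(\{1,2,3\})$. If $(A,f)$ and $(A,g)$ are term-equivalent, they have the same clone of term operations. Hence they have the same automorphisms, the same subuniverses, the same congruences, and more generally the same compatible relations on every power $A^n$. First we use the cheap invariants (automorphisms, subalgebras, congruences) to cut the possible additive $g$ down to a short explicit list. Then for each remaining candidate we exhibit a relation that $g$ preserves and $f$ does not.

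\emph{Step 1: automorphisms.} The permutation $\sigma$ of $\{1,2,3\}$ swapping $1$ and $3$ induces a Boolean automorphism that commutes with $f$, because the exceptional point $\{1,3\}$ is $\sigma$-invariant. So $g$ must commute with $\sigma$. In particular $c:=g(\emptyset)$ is $\sigma$-invariant, and $g(\{3\})=\sigma g(\{1\})$. Write $d:=g(\{2\})$.

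\emph{Step 2: subalgebras.} Every Boolean subalgebra of $A$ is closed under $f$; for $\{\emptyset,\{2\},\{1,3\},A\}$ this uses $f(\{1,3\})=A$. So every Boolean subalgebra must be $g$-closed. Hence $g(\{1\})\in\{\emptyset,\{1\},\{2,3\},A\}$, and similarly for $\{2\}$ and $\{3\}$.

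\emph{Step 3: congruences.} For an additive $g$, not necessarily normal, a principal ideal ${\downarrow}a$ is congruential iff $g(p)\setminus c\le a$ for every atom $p\le a$. One checks this by moving one atom at a time and using $g(x)\supseteq c$. For $f$, a direct check gives the congruential ideals ${\downarrow}\emptyset$, ${\downarrow}\{2\}$, ${\downarrow}\{1,2\}$, ${\downarrow}\{2,3\}$ and ${\downarrow}A$. In particular ${\downarrow}\{1\}$, ${\downarrow}\{3\}$ and ${\downarrow}\{1,3\}$ are not congruential. Matching the two lists forces the following:
\begin{itemize}
\item $2\in g(\{1\})$ and $g(\{1\})\setminus c\subseteq\{1,2\}$;
\item $g(\{2\})\setminus c\subseteq\{2\}$.
\end{itemize}
With Step 2 this gives $g(\{1\})\in\{\{2,3\},A\}$. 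Since $c$ is $\sigma$-invariant and $3\notin g(\{1\})\setminus c$, we get $3\in c$, so $c\in\{\{1,3\},A\}$. As $c\le g(\{1\})$, this forces $g(\{1\})=g(\{3\})=A$.

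The upshot is that $g(x)=A$ whenever $x\cap\{1,3\}\neq\emptyset$, with $c\in\{\{1,3\},A\}$ and $c\le d\le c\cup\{2\}$. We are left with at most three candidates.

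\emph{Step 4: the candidates.} If $c=A$, then $g$ is constantly $A$, so its clone is just the Boolean clone. Every Boolean term operation commutes with all permutations of $\{1,2,3\}$, but $f$ does not commute with the transposition $1\leftrightarrow2$. So this case is impossible.

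In the remaining cases $c=\{1,3\}$ and $d\in\{\{1,3\},A\}$. The term constants are then exactly the elements of the subalgebra $\{\emptyset,\{2\},\{1,3\},A\}$. Also, $g$ only ``tests'' whether $x$ meets $\{1,3\}$, and cannot distinguish $1$ from $3$ in a way that detects $x\supseteq\{1,3\}$, which is precisely what $f$ does. To make this rigorous, consider the subuniverse $R$ of $(A,g)^2$ generated by the diagonal $\{(a,a)\}$ together with the pair $p=(\{1\},\{1,3\})$. The claim is that $(f(\{1\}),f(\{1,3\}))=(\{1\},A)\notin R$. This shows $f$ is not a term operation of $g$, contradicting term-equivalence.

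The main obstacle is computing $R$. Applying $g$ to the atom $(\{3\},\emptyset)$ produces new elements such as $(A,c)$, so the Boolean atoms of $R$ get refined, and the closure must be iterated. Since $A^2$ has only $64$ elements this is finite, but it has to be organized carefully. I would first try to guess $R$ explicitly as the set of pairs $(x,y)$ satisfying a few coordinatewise conditions, for instance $x\cap\{2\}=y\cap\{2\}$, $x\cap\{1,3\}\neq\emptyset\Rightarrow 1\in y$, together with a $\{2\}$-clause. I would then verify directly that this set contains the generators, is closed under the Boolean operations and $g$, and excludes $(\{1\},A)$. If no clean description appears, the fallback is an exhaustive computation of the closure.
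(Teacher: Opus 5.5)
Steps 1--3 are sound: your criterion for congruential principal ideals of a possibly non-normal additive $g$ is right, and so is the list of congruential ideals of $f$. Step 4, however, fails, and not merely for lack of computation. The claim $(\{1\},A)\notin R$ is false for both surviving candidates, because in both cases $f$ \emph{is} a term operation of $(A,g)$.

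If $c=d=\{1,3\}$, then $g(y)=\{1,3\}$ iff $y\subseteq\{2\}$. Hence $-g(-x)=\{2\}$ exactly when $x\supseteq\{1,3\}$ (and $\emptyset$ otherwise), and so $f(x)=x\cup -g(-x)$. Your heuristic that $g$ cannot detect $x\supseteq\{1,3\}$ overlooks applying $g$ to $-x$. If $c=\{1,3\}$ and $d=A$, then $g(y)=\{1,3\}$ iff $y=\emptyset$, and $f(x)=x\cup -g(x\oplus g(0))$. Consequently every subuniverse of $(A,g)^2$, in particular $R$, is closed under $f$, so $(f(\{1\}),f(\{1,3\}))=(\{1\},A)\in R$. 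No relation compatible with $g$ can separate $f$ from $\operatorname{Clo}(A,g)$ here. The obstruction lies in the opposite direction, namely $g\notin\operatorname{Clo}(A,f)$.

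That obstruction is already available to you. Since $f(\emptyset)=\emptyset$ and $f(A)=A$, the two-element Boolean subalgebra $\{\emptyset,A\}$ is a subuniverse of $(A,f)$. By your own Step 2 it must be $g$-closed, giving $c=g(\emptyset)\in\{\emptyset,A\}$. Equivalently, the constant term operations of $(A,f)$ take only the values $\emptyset$ and $A$, whereas you yourself note that for $c=\{1,3\}$ the constants of $(A,g)$ form a four-element algebra. Moreover, Step 3 actually yields $2\in g(\{1\})\setminus c$, because ${\downarrow}\{1,2\}$ is congruential and ${\downarrow}\{1\}$ is not; hence $2\notin c$. Together with $\{1,3\}\subseteq c$ this forces $c=\{1,3\}$, contradicting $c\in\{\emptyset,A\}$. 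So no additive $g$ survives Steps 1--3, and Step 4 can be dropped altogether.

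With this repair your argument is a genuinely different route from the paper's. The paper computes the entire unary clone $\operatorname{Clo}_1(A,f)$: the $16$ joins of the partition $-x\AND f(x)$, $x\AND f(-x)$, $-f(x)$, $-f(-x)$. It checks that only $0$, $1$, $x$ are additive, and observes that these generate only Boolean terms. Your route never computes the clone and uses only invariants shared by term-equivalent algebras (automorphisms, subuniverses, congruences). It is shorter, but yields no description of the clone itself.
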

\begin{proof}
    Let $\operatorname{Clo}_1(A,f)\subseteq A^A$ be the unary term clone of $(\gA,f)$, i.e. the set of all unary term operations of the algebra. It is known that $\operatorname{Clo}(A,f)$ is the smallest subset of $A^A$ that contains the unary projections/constant terms $0$, $1$, $x$ and is closed under the Boolean operations and $f$. 

    Define four unary term operations:
    \[
        a(x)=-x\AND f(x),\quad 
        b(x)=x\AND f(-x),\quad
        c(x)=-f(x),\quad
        d(x)=-f(-x)\,.
    \]
    These are all in $\operatorname{Clo}_1(A,f)$. We claim that
    these operations are pairwise disjoint and their join is the constant $1$ operation. 

    The only pair for which disjointness is not straightforward is $c$ and $d$. By definition, $f$ is extensive, that is, $x\leq f(x)$ and $-x\leq f(-x)$. It follows that $c(x)\leq -x$ and $d(x)\leq x$. Hence, $c(x)\AND d(x) = 0$.

    As for the join:
    \begin{align*}
        a\OR c &= (-x\AND f(x))\OR -f(x) = (-x\OR -f(x))\AND (f(x)\OR -f(x)) \\ &= -x\OR -f(x),\\
        (a\OR c)\OR b &= -f(x)\OR -x\OR (x\AND f(-x)) = -f(x)\OR (-x\OR f(-x)),\\
        (a\OR b\OR c)\OR d &= -f(x)\OR -x\OR f(-x)\OR-f(-x) =1. 
    \end{align*}

    It follows that in the Boolean algebra of unary operations $(A^A, \AND, -, 0)$ the four functions $a$, $b$, $c$, $d$ 
	form a partition of $1$. Therefore, the Boolean algebra they generate has exactly $2^4=16$ elements, 
	namely all joins of subfamilies:
    \[
        B = \big\{ \bigsqcup_{u\in U} u:\; U\subseteq\{a,b,c,d\}    \big\}\,.
    \]
    Note that 
    \[
        x=d\OR b,\quad  -x=c\OR a,\quad f(x)=x\OR a,\quad f(-x)=-x\OR b\,,
    \]
    thus $x$, $f(x)$, $f(-x)$, etc. all lie in $B$. 

    Next, we show that $B$ is closed under $f$. Recall that the only set on which $f$ is not the identity is $\{1,3\}$.
    The action of $f$ on the atoms:
    \begin{itemize}
        \item $a(x), b(x)\in \{\emptyset, \{2\}\}$, hence they are never $\{1,3\}$ and so $f\circ a=a$, $f\circ b=b$.
        \item $c(x)$ takes value $\{1,3\}$ exactly at $x=\{2\}$. Therefore $f\circ c$ only adds $\{2\}$ at $x=\{2\}$, so
        \[ f(c(x)) = c(x)\OR b(x)\,.  \]
        \item $d(x)$ takes value $\{1,3\}$ exactly at $x=\{1,3\}$. Thus, 
        \[f(d(x))=d(x)\OR a(x)\,.\]
    \end{itemize} 
    Take now an arbitrary $h\in B$. Then $h$ is a join of some subset of $\{a,b,c,d\}$. The only way $h(x)$ can be equal $\{1,3\}$ is 
    \begin{itemize}
        \item at $x=\{2\}$ if $c$ is included but $b$ is not, or
        \item at $x=\{1,3\}$ if $d$ is included but $a$ is not.
    \end{itemize}
    Therefore $f(h(x))$ is always one of
    \[
        h(x),\quad h(x)\OR a(x),\quad h(x)\OR b(x),\quad h(x)\OR a(x)\OR b(x)\,.
    \]
    Consequently, $B = \operatorname{Clo}_1(A,f)$. 

    Now that we described the full unary term clone, it is straightforward to 
	check that among the $16$ terms the only additive ones are $0$, $1$ and $x$. To 
	complete the proof it remained to show that none of $0$, $1$ and $x$ can define $f$. 
	But this is immediate as the only terms that can be expressed using these 
	three are $0$, $1$, $x$, and $-x$.
\end{proof}

By taking the variety $\mathbf{V}(\gA)$ we get:

\begin{corollary}
	There is a strongly non-additive variety of Boolean frames satisfying the CEP. 
\end{corollary}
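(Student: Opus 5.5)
The plan is to take $\mathbf{V}=\mathbf{V}(\gA)$, the variety generated by the algebra $(\gA,f)$ of Example~\ref{exmp1}, and check two things separately: that $\mathbf{V}$ has the CEP, and that $\mathbf{V}$ is strongly non-additive.

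\emph{The CEP.} The Fact states that the quasi-equation \labelcref{gyengites} is equivalent to an equation, say $\varepsilon$. Since $(\gA,f)$ satisfies \labelcref{gyengites}, it satisfies $\varepsilon$. Equations are preserved under $\mathbf{H}$, $\mathbf{S}$ and $\mathbf{P}$, so by Birkhoff's theorem every member of $\mathbf{V}(\gA)=\mathbf{HSP}(\gA)$ satisfies $\varepsilon$, and hence also \labelcref{gyengites}. The Fact then gives the CEP for every algebra in $\mathbf{V}$, so the class $\mathbf{V}$ has the CEP. This is the one place where the equational form of \labelcref{gyengites} is needed. A quasi-equation alone would only pass to $\mathbf{SP}(\gA)$, not to homomorphic images, so we rely on the Fact's claim that it is equivalent to an equation.

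\emph{Strong non-additivity.} By definition, $\mathbf{V}$ is equivalent to an additive variety only if \emph{every} algebra in $\mathbf{V}$ is term-equivalent, over the Boolean reduct, to some additive operation on the same universe. The generator $(\gA,f)$ lies in $\mathbf{V}$. By Theorem~\ref{thm:warmingup}, there is no additive $g$ on $A$ with $(\gA,f)$ term-equivalent to $(\gA,g)$. Hence $\mathbf{V}$ is not equivalent to any additive variety, so it is strongly non-additive.

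Theorem~\ref{thm:warmingup} carries all the substantive work, so I expect no real obstacle here. The only point to state carefully is that the equivalence notion is pointwise (per algebra), so a single non-conforming member of the variety already witnesses strong non-additivity.
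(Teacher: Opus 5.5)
\textbf{The gap.} The CEP half fails at the sentence ``Since $(\gA,f)$ satisfies $(*)$''. It does not satisfy it. Take $x=\{1,3\}$, $y=\{1\}$, $z=\{3\}$. Then $x\oplus y=\{3\}\leq z$, but $f(x)\oplus f(y)=\{1,2,3\}\oplus\{1\}=\{2,3\}\not\leq\{3\}=f(z)$.

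The paper asserts the same thing (``that $(*)$ holds for $f$ is immediate''), and it gets the corollary exactly as you do. So you have reproduced its route faithfully, but the route breaks here. With $(*)$ false in the generator, Birkhoff's theorem has nothing to transfer and the Fact gives nothing. The Fact itself is correct: under $(*)$, monotonicity gives $f(x)\oplus f(y)\leq f(b)\AND -f(0)=f(b)\oplus f(0)$, so downward closures of congruential ideals stay congruential.

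In fact no such condition can hold in $\gA$. On the subalgebra $\{\emptyset,\{1\},\{2,3\},\{1,2,3\}\}$ the operation $f$ is the identity, so $\{\emptyset,\{1\}\}$ is a congruential ideal there. It is not congruential in $\gA$, because $\{3\}\oplus\{1,3\}=\{1\}$ while $f(\{3\})\oplus f(\{1,3\})=\{1,2\}$. That congruence does extend, but only to the larger ideal ${\downarrow}\{1,2\}$.

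\textbf{How to repair it.} The conclusion is still true for $\mathbf{V}(\gA)$, but it needs a different argument.

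First, $\gA$ has the CEP. Since $1\in f(X)\iff 1\in X$ and $3\in f(X)\iff 3\in X$, the congruential ideals of $\gA$ are exactly $\{\emptyset\}$, ${\downarrow}\{2\}$, ${\downarrow}\{1,2\}$, ${\downarrow}\{2,3\}$ and $A$. A direct check on the proper subalgebras then shows that every congruence of a subalgebra extends.

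Second, to pass to every member of the variety, use congruence distributivity. The Jónsson-lemma argument (as in Davey's work on CEP in congruence-distributive varieties) shows that a congruence-distributive variety has the CEP as soon as every ultraproduct of its subdirectly irreducible members has it. Here $\mathbf{V}(\gA)_{SI}\subseteq\mathbf{HS}(\gA)$ consists, up to isomorphism, of three algebras: $\gA$ itself, the subalgebra $\{\emptyset,\{2\},\{1,3\},\{1,2,3\}\}$, and the two-element algebra with $f=\mathrm{id}$. These are finitely many finite algebras, so ultraproducts of them are again among them, and each has the CEP.

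Alternatively, change the witness. The algebra in the proof of Theorem~\ref{functor}, or any $\gA_S$ from Theorem~\ref{thm:m1}, generates a discriminator variety, hence one with the CEP, and is strongly non-additive by Makinson's theorem.

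Your strong non-additivity half needs no change. Theorem~\ref{thm:warmingup} is correct, and your pointwise reading of the definition is right.
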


\begin{fact}
    Every Boolean frame satisfying $(*)$ is monotone.
\end{fact}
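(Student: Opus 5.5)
The plan is to derive monotonicity, i.e.\ $x\leq y\implies f(x)\leq f(y)$, directly from $(*)$ by choosing the three variables so that the hypothesis of $(*)$ holds trivially. The natural first attempt is to instantiate $(*)$ with the pair $x,y$ and $z=y$. Then we need $x\oplus y\leq y$. If $x\leq y$, then $x\oplus y = y\AND -x\leq y$, so the hypothesis holds. Hence $(*)$ yields
\[
f(x)\oplus f(y)\leq f(y).
\]

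The next step is to read this inequality in the Boolean algebra. We have $f(x)\oplus f(y) = (f(x)\AND -f(y))\OR(f(y)\AND -f(x))$. So $f(x)\oplus f(y)\leq f(y)$ implies in particular $f(x)\AND -f(y)\leq f(y)$. Since $f(x)\AND -f(y)$ is also below $-f(y)$, it lies below $f(y)\AND -f(y)=0$. Therefore $f(x)\AND -f(y)=0$, that is, $f(x)\leq f(y)$, which is monotonicity.

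I do not expect any real obstacle. The only point requiring care is to choose $z=y$, the larger element, rather than $z=x$. With $z=x$ the hypothesis $x\oplus y\leq x$ would force $y\AND -x=0$, which is the wrong direction unless $x=y$. It is also worth noting in passing that the same argument works for every Boolean frame satisfying the quasi-equation $(*)$, with no appeal to the CEP or to Example~\ref{exmp1}.
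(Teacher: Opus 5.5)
Your proof is correct, and it takes a more direct route than the paper's.

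The paper never uses the instance $z=y$. Instead it instantiates $(*)$ twice, both times with $y=0$:
\begin{itemize}
\item first at $(a,0,a)$, which gives $f(a)\oplus f(0)\leq f(a)$, hence $f(0)\leq f(a)$ for every $a$;
\item then, for $a\leq b$, at $(a,0,b)$, which gives $f(a)\oplus f(0)\leq f(b)$.
\end{itemize}
It then concludes $f(a)\leq f(a)\OR f(0) = (f(a)\oplus f(0))\OR f(0)\leq f(b)$, using the first step.

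Your single instantiation at $(x,y,y)$ avoids the intermediate inequality $f(0)\leq f(a)$. It also makes a sharper point. Since $x\oplus y\leq y$ holds iff $x\leq y$, and $f(x)\oplus f(y)\leq f(y)$ holds iff $f(x)\leq f(y)$, the $z=y$ instance of $(*)$ is literally the monotonicity quasi-equation. So monotonicity is a special case of $(*)$, not a consequence that needs an argument.

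The paper's route only ever invokes $(*)$ with $y=0$. It therefore shows that the fragment $x\leq z\implies f(x)\oplus f(0)\leq f(z)$ already forces monotonicity. That fragment is itself implied by monotonicity, though, so neither route establishes anything the other misses.
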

\begin{proof}
  Let $(\gA,f)$ be a Boolean frame satisfying $(*)$.  First, we show that $f(0)\leq f(a)$, for every $a\in A$. Since $a= a\oplus 0 \leq a$, we get $f(a)\oplus f(0) \leq f(a)$ by ($\ast$), consequently
  \[
    f(a) =[f(a)\oplus f(0)] \vee f(a) =f(a)\vee f(0).
\]
  Now suppose $a\leq b$.  Then by $f(0)\leq f(b)$  and ($\ast$) we have $f(a)\oplus f(0) \leq f(b)$, which implies
   \[f(a)\leq f(a)\vee f(0) = [f(a)\oplus f(0)]\vee f(0)\leq f(b).\]
\end{proof}

The identity ($\ast$) provides a useful intermediate condition between additivity and monotonicity. While additivity implies  CEP, there exist continuum many monotone varieties without the CEP (cf. \cite{GyenisMolnarNDJFL}). Since 
($\ast$) implies  CEP but is not, in general, interdefinable with additive functions, additivity  is too strong, whereas monotonicity alone is too weak for varieties to have the CEP.\\

\noindent With the following construction in modal logic, it is easy to obtain continuum many strongly non-additive, but monotone varieties with the CEP. Recall the definition of a wheel frame from \cite{Miyazaki2005}. For $n\geq 5$ the wheel frame $\mathcal{W}_n = \<W_n, R_n\>$ is the frame, where
\begin{align*}
    W_n&\defeq\{ 0,\ldots, n-1 \}\cup\{h\} \\
    R_n&\defeq\{ \<x,y\>: x, y<n, |x-y|\leq 1\ (\text{mod } n)\}\cup\{ \<h,h\>, \<h,x\>, \<x,h\>: x<n\}.
\end{align*}
    \begin{center}
        \begin{tikzpicture}[thick,scale=.2]
            \graph [nodes={draw,circle,inner sep=.2pt,minimum size=5pt, fill}, clockwise, radius=.2cm, empty nodes]
  {subgraph A[at={(0,-1)}] -- subgraph C_n[n=9]};
        \end{tikzpicture}

        The frame $\mathcal{W}_9$
    \end{center}

\begin{theorem}
    There are continuum many strongly non-additive, monotone varieties with the CEP.
\end{theorem}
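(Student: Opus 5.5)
We combine the wheel frames with the warm-up construction of \Cref{exmp1}. Miyazaki's wheel frames $\mathcal{W}_n$ ($n\geq 5$) have the property that, for an infinite set $S\subseteq\{n: n\geq 5\}$, the normal modal logics $L_S=\bigcap_{n\in S}\operatorname{Log}(\mathcal{W}_n)$ are pairwise distinct. Equivalently, the varieties $\mathbf{V}_S=\mathbf{V}(\{\mathcal{W}_n^+ : n\in S\})$ generated by the complex algebras are pairwise distinct. The reason is that $\mathcal{W}_m^+$ does not lie in the variety generated by the other wheels: it is a finite subdirectly irreducible algebra, so by J\'onsson's lemma it would otherwise have to be in $HS$ of some $\mathcal{W}_n^+$, and this is excluded by the combinatorics of the wheels. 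This gives continuum many additive varieties.

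We then make these non-additive while keeping $(*)$. Write $g$ for the additive diamond of $\mathcal{W}_n^+$. On the algebra $\mathcal{W}_n^+\times\gA$, with $\gA=\mathfrak P(\{1,2,3\})$ and the $g$ of \eqref{gdef} on the second factor, we define
\[f(x)=x\OR(g(x)\AND g(g(x)))\]
uniformly. The variety $\mathbf{W}_S$ is then generated by the algebras $(\mathcal{W}_n^+\times\gA, f)$ for $n\in S$.

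The verification has the following steps.
\begin{enumerate}[label=(\roman*)]
\item The operation $f$ satisfies $(*)$. It suffices to check this on generators, since $(*)$ is equivalent to an equation. For any additive and monotone $g$ we have $f(x)\oplus f(y)\leq (x\oplus y)\OR (g(x)\oplus g(y))\OR (gg(x)\oplus gg(y))$. This bound is not quite enough in general, so to be safe we restrict to those $g$ for which the term $x\mapsto g(x)\AND gg(x)$ is again an operator. Alternatively, we drop $\mathcal{W}_n$ from the $f$-construction and take the product $(\mathcal{W}_n^+,\D)\times(\gA,f')$, where $f'$ is the $f$ of \Cref{exmp1}. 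Here the product function acts coordinatewise: the additive diamond on the first factor and $f'$ on the second. Then $(*)$ and monotonicity hold coordinatewise, by the earlier Fact and the additivity of $\D$. This second option is cleaner, and it is the one I would commit to.
\item The CEP follows from $(*)$ by the earlier Fact, and monotonicity follows from the Fact that every Boolean frame satisfying $(*)$ is monotone.
\item Strong non-additivity. The factor $(\gA,f')$ is a member of each $\mathbf{W}_S$. Since it is not term-equivalent to any additive operation by \Cref{thm:warmingup}, no $\mathbf{W}_S$ is equivalent to an additive variety.
\item Distinctness. Taking the quotients of the generators by the first projection, we recover the projections $(\mathcal{W}_n^+,\D)$ in $H(\mathbf{W}_S)$. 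Using $f$ only, the identities of $\mathbf{W}_S$ must separate $S\neq S'$.
\end{enumerate}

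The main obstacle is (iv). The equations separating the wheel logics are written with $\D$, but in $\mathbf{W}_S$ the single operation $f$ acts as $\D$ on one factor and as $f'$ on the other. We must show that $\mathcal{W}_m^+$, viewed with its diamond, lies in $\mathbf{W}_S$ iff $m\in S$. I would argue with J\'onsson's lemma once more: the variety is congruence distributive because it has a Boolean reduct, and the generators are finite. Hence the subdirectly irreducibles of $\mathbf{W}_S$ lie in $HS$ of the generators. By the CEP, the finite subdirectly irreducibles are homomorphic images of subalgebras of either $\mathcal{W}_n^+$ or $(\gA,f')$. The latter are too small to be $\mathcal{W}_m^+$ (for $m\geq 5$, $\mathcal{W}_m^+$ has at least $2^6$ elements). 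So $\mathcal{W}_m^+\in\mathbf{W}_S$ reduces to Miyazaki's separation of the wheels, and distinct $S$ give distinct varieties. Choosing continuum many distinct infinite $S$ completes the proof.
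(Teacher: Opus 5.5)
Your committed construction in (i) is the paper's. Each factor is a homomorphic image of $\mathcal{W}_n^+\times\gA$, so your $\mathbf{W}_S$ equals $\mathbf{V}(\{\mathcal{W}_n^+ : n\in S\}\cup\{\gA\})$. Steps (i)--(iii) (CEP and monotonicity from $(*)$ on the generators, strong non-additivity because $\gA\in\mathbf{W}_S$) match the paper. The gap is in the distinctness step (iv), and in the separation fact you quote at the start.

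J\'onsson's lemma only places the subdirectly irreducibles of $\mathbf{V}(\mathcal K)$ in $\mathbf{HSP}_U(\mathcal K)$. This collapses to $\mathbf{HS}(\mathcal K)$ only when $\mathcal K$ is a \emph{finite} set of finite algebras. You need infinite $S$ to get continuum many varieties, and then the non-principal ultraproducts of wheels are infinite algebras that your argument never rules out. The appeal to the CEP does not help: it gives $\mathbf{HS}=\mathbf{SH}$, not permission to drop $\mathbf{P}_U$. The paper handles these ultraproducts with Miyazaki's formula $A_p$. Since $\mathcal{W}_p\not\Vdash A_p$ while $\mathcal{W}_q\Vdash A_p$ for every prime $q\neq p$, the equation $A_p=1$ holds in every ultraproduct of the other wheels and in everything in $\mathbf{HS}$ of it, yet fails in $\mathcal{W}_p^+$. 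Ultraproducts concentrating on $\gA$ are isomorphic to $\gA$, which is too small.

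Second, the separation you attribute to the wheels is false over all $n\geq 5$. The map $i\mapsto i\bmod n$ on the rim, hub to hub, is a surjective p-morphism $\mathcal{W}_{kn}\to\mathcal{W}_n$, so $\mathcal{W}_n^+\in\mathbf{S}(\mathcal{W}_{kn}^+)$. For example, $S=\{10,20,40,\dots\}$ and $S\cup\{5\}$ generate the same variety, with or without $\gA$. In fact, for $m,n\geq 5$ a p-morphism $\mathcal{W}_n\to\mathcal{W}_m$ must send hub to hub and be a covering of the reflexive rim cycles. Hence $\mathcal{W}_m^+\in\mathbf{HS}(\mathcal{W}_n^+)$ iff $m\mid n$, and the index set must be restricted, as the paper does with primes $\geq 5$.

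With primes you could even bypass Miyazaki's theorem. Every wheel algebra satisfies $x\neq 0\to\D\D x=1$ and $\D 0=0$, so every nontrivial subalgebra of an ultraproduct of wheels is simple, and $\mathbf{HS}$ of such an ultraproduct is its $\mathbf{S}$ plus the trivial algebra. Embeddability of the finite algebra $\mathcal{W}_p^+$ is expressed by an existential sentence. By \L{}o\'s it therefore reduces to $\mathcal{W}_p^+$ embedding into a single $\mathcal{W}_q^+$ with $q\in S$, which the covering argument excludes for $q\neq p$.
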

\begin{proof}
 Let $Prim = \{n\in \omega: \text{ $n$ is prime $n\geq 5$}\}$ and $\mathfrak{W}_n=\mathcal{W}_n^+$ be the complex algebra. For $X\subseteq Prim$ we write \[\mathbf{V}(X) =\mathbf{V}\big(\{\mathfrak{W}_p: p\in X\}\cup\{\gA\}\big),\] where $\gA$ is from Example \labelcref{exmp1}. We show that for all $X\neq Y\subsetneq Prim$, where  $X,Y$ are non-empty, we have \[\mathbf{V}(X)\neq \mathbf{V}(Y).\]
 Monotonicity and CEP follows from the fact that the generators satify the identity ($\ast$), while strongly-non additivity follows from Theorem \labelcref{thm:warmingup}. Now let us show that the varieties are different. Since each $\mathcal{W}_n$ is point-generated,  $\mathfrak{W}_n$ is subdirectly irreducible. 
 Thus, by Jónnson's Lemma, it is enough to see that for $p\in X\setminus Y$ we have $\mathfrak{W}_p\not\in \mathbf{HSP}_U\big(\{\mathfrak{W}_q: q\in Y\}\cup\{\gA\}\big)$. Otherwise, if $\mathfrak{W}_p\in \mathbf{V}(Y)_{SI} $ for some $p\in X$, then there would be  some $\gD\in \mathbf{P}_U(\{\mathfrak{W}_q: q\in Y\}\cup\{\gA\})$ such that $\mathfrak{W}_p\in \mathbf{HS}(\gD)$. Moreover, either  $\gD\cong \gA$ or $\gD$ would be an ultraproduct of some $\mathfrak{W}_q$'s from $\mathbf{V}(Y)$. The latter  cannot occur, since, by \cite[Theorem 21]{Miyazaki2005} for any $p\in Prim$ there is a formula $A_p$, such that $\mathcal{W}_p\not\Vdash A_p$, while $\mathcal{W}_q\Vdash A_p$, for all $p\neq q\in Prim$. The former case  is also impossible, as $|\mathfrak{W}_p| > |\gA|$, by construction.
\end{proof}

\section{Strongly non-additive logics with deduction}

Let us start by recalling (e.g. from \cite{BurrisSankappanavar1981}) some of the important definitions 
and theorems from universal algebra that we make use of in this section. 
\medskip

A variety $\mathbf{V}$ has \emph{equationally definable principal congruences} (EDPC) if there exist 
finitely many pairs of quaternary terms
\[
	p_1,q_1,\dots,p_n,q_n
\]
such that for every algebra $\gA\in\mathbf{V}$ and all $a,b,c,d\in A$,
\[
	(c,d)\in \operatorname{Cg}^{\mathfrak A}(a,b)
		\quad\Longleftrightarrow\quad
		\mathfrak A\models \bigwedge_{i=1}^n p_i(a,b,c,d) = q_i(a,b,c,d).
\]
Equivalently, the principal congruence generated by a pair $(a,b)$ is uniformly definable throughout $\mathbf{V}$ 
by a conjunction of equations.

A ternary term $t(x,y,z)$ is a \emph{discriminator term} on $\mathfrak A$ if
\[
	t^{\mathfrak A}(a,b,c)=
	\begin{cases}
		a & \text{ if } a\neq b,\\[2mm]
		c & \text{ if } a=b,
	\end{cases}
	\qquad(a,b,c\in A).
\]
A variety $\mathbf V$ is a \emph{discriminator variety} if there exists a ternary term $t(x,y,z)$ such that, 
for every subdirectly irreducible algebra $\mathfrak A\in\mathbf V$, the interpretation $t^{\mathfrak A}$ is a 
discriminator operation on $\mathfrak A$. In classes of algebras which have a Boolean algebra reduct, the 
discriminator term can be replaced with the so called switching term
\[
	d^{\gA}(x) = \begin{cases}
		1 & \text{ if } x\neq 0,\\
		0 & \text{ if } x=0.
	\end{cases}
\]
By this we mean that any switching term $d^{\gA}$ gives rise to a discriminator term $t^{\gA}$ and vice-versa (see \cite{AGyNS2022}).
Every discriminator variety has EDPC and CEP \cite{BurrisSankappanavar1981}.

\begin{theorem}\label{thm:m1}
	There are continuum many strongly non-additive minimal discriminator varieties. 
\end{theorem}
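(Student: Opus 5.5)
The plan is to build continuum many pairwise distinct simple Boolean frames, each with a built-in switching term, and to take the varieties they generate. Since every Boolean frame with $f$ satisfying $f(0)=0$ and $f(x)=1$ for $x\neq 0$ is simple, we look for a single term $s(x)$ in the language of Boolean frames that acts as the switching function $d$ on every algebra in our family. Then $\mathbf V(\gA)$ is a discriminator variety for each such $\gA$. If $\gA$ is moreover finite and has no proper nontrivial subalgebras, the variety is minimal. Indeed, in a discriminator variety every subvariety is generated by its simple members. By Jónsson's lemma the simple members of $\mathbf V(\gA)$ lie in $\mathbf{HSP}_U(\gA)=\mathbf{HS}(\gA)$, and because $\gA$ is simple with no proper nontrivial subalgebras, these are only $\gA$ and the trivial algebra.

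To get continuum many examples I would not use finite algebras; there are only countably many of those. Instead I would use infinite simple algebras that are generated by $0$ alone, with $f$ encoding a subset $S\subseteq\omega$. A concrete choice: take the finite–cofinite algebra on $\omega\cup\{\infty\}$. Let $f$ act as a successor shift on the atoms $\{n\}\mapsto\{n+1\}$, with $f(0)=\{0\}$. On the remaining elements, define $f(x)=1$ when $x$ is not an atom and not $0$, except that the values on the atoms are tweaked according to $S$, for example $f(\{n\})=\{n+1\}\cup\{\infty\text{-marker}\}$ exactly when $n\in S$. The constant-generated subalgebra then contains every atom $f^n(0)$, so it is the whole algebra and there are no proper subalgebras. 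A switching term can be written as $s(x)= f(x\AND -f^{k}(x)\dots)$. This step is the part I expect to need the most adjustment, since the switching term must be uniform while the algebras vary with $S$. The most robust route is to arrange $f(x)=1$ for all $x$ that are not $0$ and not an atom, and $f(\{n\})$ to be an atom or coatom depending on $S$. Then $s(x)=f(x)\OR f(f(x))\OR f(-x)\dots$ collapses to $1$ uniformly, and I would verify this uniformity by a short case split: $x=0$, $x$ an atom, $x$ a coatom, and otherwise.

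Distinctness goes as follows. The equations $f^{n+1}(0)\AND\ldots$ recording whether $n\in S$ hold in $\gA_S$ and fail in $\gA_T$ whenever $n\in S\setminus T$. By minimality, distinct varieties arise from distinct $S$, which gives $2^{\aleph_0}$ of them.

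The main obstacle is strong non-additivity, which requires that no additive $g$ be term-equivalent to $f$ on any member of the variety, and in particular on $\gA_S$. I would mimic the proof of \Cref{thm:warmingup}: identify enough of the unary term clone to see that every unary term $t$ either fails additivity or fails to generate $f$. The key observation is that an additive $g$ that is term-definable from $f$, and from which $f$ is definable, must make $\gA_S$ simple. Additivity then forces $g$ to be determined on the atoms, and since $f(\{n\}\OR\{m\})=1$ this leads to a contradiction via a counting or monotonicity argument on finite joins of atoms. If that fails, the fallback is to include the finite algebra $\gA$ of Example~\ref{exmp1}, suitably simplified, as a subalgebra-free component, and to invoke \Cref{thm:warmingup} directly.
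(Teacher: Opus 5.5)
Your proposal has genuine gaps in each of its three main steps.

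\textbf{The construction.} As written, it is not coherent. Your simplicity criterion requires $f(0)=0$. Yet you generate the atoms as $f^{n}(0)$, which forces $f(0)\neq 0$. With $f(0)=\{0\}$, your candidate $s(x)=f(x)\OR f(f(x))\OR f(-x)\OR\cdots$ already gives $s(0)\supseteq\{0\}\neq 0$.

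The ``robust route'' is worse. Setting $f(x)=1$ for every nonzero non-atom gives $f(1)=1$. If also $f(0)=0$, then $\{0,1\}$ is a subalgebra with $f=\mathrm{id}$, and the constants generate nothing more. So $\mathbf V(\gA_S)$ contains a proper nontrivial subvariety, and minimality fails.

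The paper avoids this by keeping $f(\emptyset)=\emptyset$ and starting the chain at the \emph{top}:
\begin{itemize}
\item $f(\mathbb N)=b_0$, $f(b_n)=b_{n+1}$, $f(a_n)=b_n$;
\item $S$ is encoded by $f(u_n)\in\{-u_n,\mathbb N\}$;
\item $f(x)=\mathbb N$ otherwise.
\end{itemize}
Then $f(x)\geq -x$ for all $x\neq\emptyset$, so $d(x)=x\OR f(x)$ is a switching term. This term only has to work in each $\gA_S$ separately, so your worry about uniformity in $S$ is unfounded. Your distinctness idea, a constant equation recording whether $n\in S$, is the right one.

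\textbf{Minimality.} Your Jónsson-lemma argument only works for finite algebras; for infinite $\gA$ one has $\mathbf{HSP}_U(\gA)\neq\mathbf{HS}(\gA)$. What you need is the following. Since $\gA_S$ is generated by its constants, $\gA_S\cong\mathfrak F_{\mathbf V_S}(\emptyset)$. Since $\gA_S$ is simple, the constant-generated subalgebra of any nontrivial member of $\mathbf V_S$ is a nontrivial homomorphic image of $\mathfrak F_{\mathbf V_S}(\emptyset)$, hence a copy of $\gA_S$. Therefore every nontrivial subvariety contains $\gA_S$.

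\textbf{Strong non-additivity.} Computing the unary term clone of an infinite algebra, as in Theorem~\ref{thm:warmingup}, is not feasible. The ``counting or monotonicity argument on finite joins of atoms'' is not an actual argument.

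The missing idea is Makinson's theorem: every nontrivial variety of Boolean frames with a monotone (in particular additive) operation contains a two-element algebra. Suppose $f_S=\tau(g)$ for an additive $g$. Term reducts commute with $\mathbf H$, $\mathbf S$, $\mathbf P$, so the $\tau$-reduct of that two-element algebra lies in $\mathbf V(\gA_S)$. By minimality it would generate $\mathbf V(\gA_S)$. This is impossible: a variety generated by a two-element algebra is locally finite, whereas $\gA_S$ is infinite and generated by its constants.

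So minimality must be proved first; it does the work that the term-clone analysis did in the finite warm-up. Your fallback of adding the algebra of Example~\ref{exmp1} as a further generator cannot work either. It would make the (locally finite) variety generated by that finite algebra a proper nontrivial subvariety, destroying minimality.
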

\begin{proof}
	Let $B$ be the Boolean algebra of finite-cofinite subsets of $\mathbb{N}$
	\[
		B = \{ X\subseteq\mathbb{N}:\; X\text{ is finite or cofinite}\},
	\]
	with atoms $a_n = \{n\}$, co-atoms $b_n = \mathbb{N}-a_n$,  
	and $u_n = \{0, \ldots, n\}$ for $n\in\mathbb{N}$. 
	For $S\subseteq\mathbb{N}\setminus \{0\}$ define the unary operation $f_S:B\to B$ by
	\[
		f_S(x) = \begin{cases}
				b_0        & \text{ if } x=\mathbb{N},\\
				b_{n+1}    & \text{ if } x=b_n,\\
				b_n        & \text{ if } x=a_n,\\
				-u_n       & \text{ if } x=u_n\text{ and } n\in S\ (n\ge 1),\\
				\mathbb{N} & \text{ if } x=u_n\text{ and } n\notin S\ (n\ge 1),\\
				\emptyset  & \text{ if } x=\emptyset,\\
				\mathbb{N} & \text{ otherwise}.
		\end{cases}
	\]
	Let $\gA_S = (B, f_S)$ be the corresponding Boolean frame, and $\mathbf{V}_S = \mathbf{V}(\gA_S)$ the
	variety generated by $\gA_S$. \\

	\noindent {\bf Claim:} Each $\mathbf{V}_S$ is a discriminator variety. This is because
	each $\gA_S$ has a switching term
	\[
		d(x) = x\OR f(x)\,.
	\]
	Indeed, $d(\emptyset)=\emptyset$ and for any $x\neq\emptyset$, $d(x)=\mathbb{N}$. Since $\gA_S$ has a Boolean reduct, 
	$d$ gives rise to a discriminator term (see \cite[2.7]{AGyNS2022}). It follows that each $\mathbf{V}_S$ is a discriminator variety. \\

	\noindent {\bf Claim:} Each $\mathbf{V}_S$ is minimal. Note first that the entire algebra $\gA_S$ is $\emptyset$-generated:
	\[
		b_n = f_S^{(n+1)}(\mathbb{N}), \qquad a_n = -b_n \text{ for } n\in\mathbb{N},
	\]
	and thus the Boolean operations generate every finite or cofinite subset of $\mathbb{N}$. 
	Next, as $\gA_S$ has a discriminator term, $\gA_S$ is simple. Consider the free 
	algebra $\mathfrak{F}_S(\emptyset)$ generated by the empty set in the variety $\mathbf{V}_S$ 
	(it exists, because the similarity type contains nullary symbols). By the universal 
	property of the free algebra there is a canonical homomorphism
	\[
		h: \mathfrak{F}_S(\emptyset) \to \gA_S\,.
	\]
	The image is the subalgebra of $\gA_S$ generated by the nullary symbols, which is exactly the entire $\gA_S$, hence $h$ is surjective. 
	Because $\gA_S$ generates the variety, if for two terms $t^{\gA}=s^{\gA}$, then this holds in the free algebra too: $t^{\mathfrak{F}_S(\emptyset)}=s^{\mathfrak{F}_S(\emptyset)}$, meaning that $h$ must be injective. Therefore, $h$
	is an isomorphism $\mathfrak{F}_S(\emptyset)\cong  \gA_S$. Let now $\mathbf{W}\subseteq\mathbf{V}_S$ be a subvariety. Then
	there is a canonical surjection 
	\[
		\mathfrak{F}_S(\emptyset) \twoheadrightarrow \mathfrak{F}_{\mathbf{W}}(\emptyset)\,.
	\]
	But as $\mathfrak{F}_S(\emptyset)\cong  \gA_S$, $\mathfrak{F}_S(\emptyset)$ is simple, and its homomorphic images are either
	trivial or itself. If $\mathbf{W}$ is non-trivial, then $\mathfrak{F}_{\mathbf{W}}(\emptyset)$ is nontrivial, hence
	\[
		\mathfrak{F}_{\mathbf{W}}(\emptyset)\cong \mathfrak{F}_S(\emptyset)\cong  \gA_S\,.
	\]
	In this case $\gA\in\mathbf{W}$ and therefore $\mathbf{W} = \mathbf{V}_S$, meaning that $\mathbf{V}_S$ has no proper nontrivial
	subvarieties.\\

	\noindent {\bf Claim:} The operation $f_S$ of $\gA_S$ cannot be interdefined with any additive operation $g$. 
	Indeed, if $g$ is additive, and for some term $\tau\in \operatorname{Term}($Boolean, $g)$ we have
	$\tau(x) = f_S(x)$, then by Makinson \cite[Theorem 2]{makinson1971} we would have $\mathbf{2}\in \mathbf{V}(\gA_S)$. By minimality
	of $\mathbf{V}_S$ this would imply $\mathbf{V}(\mathbf{2}) = \mathbf{V}_S$, contradicting the existence of an infinite 
	$\emptyset$-generated algebra $\gA_S\in\mathbf{V}_S$. \\

	\noindent {\bf Claim:} We have continuum many distinct $\mathbf{V}_S$. For each $n\ge 1$ we have
	\[
		u_n = \bigsqcup_{i=0}^{n}-f^{(n+1)}(\mathbb{N}),
	\]
	because $f^{(n+1)}(\mathbb{N}) = b_i$ and $-b_i = a_i$. Now the equation
	\[
		f(u_n)\AND u_n = 0
	\]
	holds in $\gA_S$ iff $f_S(u_n) = -u_n$ iff $n\in S$. Hence, $S\mapsto \mathbf{V}_S$ is injective, 
	giving $2^{\aleph_0}$ pairwise distinct minimal discriminator varieties. 
\end{proof}

We actually proved more: Makinson's result applies not only to additive but also to monotone operations. Therefore, 
there are continuum many strongly non-monotone minimal discriminator varieties. Here strongly non-monotone means
that the variety is not term-equivalent to any variety where the operation is monotonic. \\

The translation of Theorem \labelcref{thm:m1} to logic is as follows.

\begin{corollary}
	There are continuum many strongly non-additive (in fact, non-monotone) maximal modal logics 
	having a (global) deduction-detachment theorem.
\end{corollary}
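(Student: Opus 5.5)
The corollary follows by transferring Theorem~\ref{thm:m1} along the dual isomorphism between axiomatic extensions of congruential modal logic and subvarieties of $\Bf$. For each $S\subseteq\mathbb{N}\setminus\{0\}$ I take $\mathcal{L}_S$ to be the logic of $\mathbf{V}_S$, i.e.\ the extension whose equivalent algebraic semantics is $\mathbf{V}_S$. Since $\Bf$ algebraizes congruential modal logic and the correspondence is a lattice anti-isomorphism, $S\mapsto\mathcal{L}_S$ is injective because $S\mapsto\mathbf{V}_S$ is. So the proof reduces to checking that three properties transfer: maximality, the deduction theorem, and strong non-additivity.

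\emph{Maximality.} Minimality of $\mathbf{V}_S$ among nontrivial subvarieties translates to maximality of $\mathcal{L}_S$ among consistent extensions. The trivial variety corresponds to the inconsistent logic, so it is excluded on both sides.

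\emph{Deduction theorem.} Every discriminator variety has EDPC, as recalled above, so each $\mathbf{V}_S$ has EDPC. By the Blok--Pigozzi transfer theorem cited in Section~1 (see \cite{Font}), EDPC of an equivalent algebraic semantics is equivalent to the global deduction-detachment theorem of the logic. Hence each $\mathcal{L}_S$ has the DDT. For concreteness, I would note that the switching term $d(x)=x\OR f(x)$ yields the explicit detachment formula $-d(-p)\to q$, or the dual form obtained from $d$.

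\emph{Strong non-additivity and non-monotonicity.} I read strong non-additivity of a logic as strong non-additivity of its algebraic counterpart, which is exactly what Theorem~\ref{thm:m1} gives. The parenthetical ``in fact, non-monotone'' comes from the remark following the theorem. Makinson's theorem places $\mathbf{2}$ in any variety generated by an algebra whose operation is term-definable from a monotone operation. By minimality this would force $\mathbf{V}_S=\mathbf{V}(\mathbf{2})$, contradicting the fact that $\gA_S$ is infinite and $\emptyset$-generated.

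The main obstacle is definitional rather than technical. ``Strongly non-additive'' is defined for varieties, so for logics it must be taken through the algebraization. One also has to make sure that the non-uniform, algebra-by-algebra notion of equivalence in the definition is what the Makinson argument actually refutes. I would handle this by observing that the argument already works for a single algebra: if $f_S$ is term-definable from some additive $g$ on $B$, then $\mathbf{2}\in\mathbf{V}(\gA_S)$. This is exactly the content of the last-but-one claim in the proof of Theorem~\ref{thm:m1}.
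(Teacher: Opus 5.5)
Your proposal is correct and follows the paper's route, which simply translates Theorem~\ref{thm:m1} through the dual isomorphism between axiomatic extensions and subvarieties of $\Bf$: minimality of $\mathbf{V}_S$ becomes maximality of the logic, EDPC of these discriminator varieties becomes the global DDT via the Blok--Pigozzi bridge theorem, and the Makinson argument gives strong non-additivity and non-monotonicity (it needs only monotonicity, and refuting term-definability for the single algebra $\gA_S$ already suffices). Your explicit detachment formula $-d(-p)\to q$, i.e.\ $(p\AND -f(-p))\to q$, is correct and is a small addition that the paper does not spell out.
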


Next we show that strongly non-additive modal logics with the (global) deduction-detachment theorem abound. Let $\NVar$ be the lattice of normal subvarieties of Boolean frames, $\TVar$ the lattice of subvarieties of Boolean frames where $f(1)=1$, and $\Sna$ the lattice of strongly non-additive subvarieties of Boolean frames.

\begin{theorem}\label{functor}
	There is a lattice embedding functor $\mathbf{T}:\NVar\to\Sna$ such that for every normal variety $\mathbf{W}$ of Boolean frames
	$\mathbf{T}(\mathbf{W})$ is a strongly non-additive variety, and
	\begin{itemize}
		\item $\mathbf{T}$ is injective, and
		\item $\mathbf{T}$ preserves CEP, EDPC and the property of being discriminator variety.
	\end{itemize}
\end{theorem}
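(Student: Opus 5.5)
The plan is to obtain $\mathbf{T}(\mathbf{W})$ as the join of $\mathbf{W}$ with one fixed ``generic'' strongly non-additive minimal discriminator variety $\mathbf{C}$. The two varieties will be chosen \emph{independent}, so that the join decomposes as a product class and every property transfers componentwise.

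\textbf{Step 1: the fixed algebra $\gC$.} I would take an algebra $\gC$ of the kind built in the proof of Theorem~\labelcref{thm:m1}, modified to be \emph{non-normal}. Concretely, keep $f_S$ on nonzero elements but set $f(\emptyset)=\mathbb{N}$.

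Then I would re-run the four claims of Theorem~\labelcref{thm:m1} for $\gC$. The $\emptyset$-generation argument is unchanged, since $b_n=f^{(n+1)}(\mathbb{N})$ still holds. Minimality follows from simplicity plus $\emptyset$-generation, exactly as before. Makinson's argument again shows that no additive operation is interdefinable with the operation of $\gC$.

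The genuinely new point is a switching term. The old term $x\OR f(x)$ fails at $\emptyset$, so I would look for a replacement that corrects the value at $0$ using the constant $f(0)=1$, e.g.\ a term built from $x$, $f(x)$ and $f(f(x))$. I expect finding such a term to be the main technical obstacle. If it resists, I would instead tweak the definition of $f$ on finitely many ``otherwise'' elements to make a switching term available. Let $\mathbf{C}=\mathbf{V}(\gC)$.

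\textbf{Step 2: independence.} Every normal $\mathbf{W}$ satisfies $f(0)=0$, while $\mathbf{C}$ satisfies $f(0)=1$. With $e=f(0)$, the binary term $t(x,y)=(e\AND x)\OR(-e\AND y)$ satisfies $t(x,y)=y$ in $\mathbf{W}$ and $t(x,y)=x$ in $\mathbf{C}$. Hence $\mathbf{W}$ and $\mathbf{C}$ are independent in the sense of Grätzer--Lakser--Płonka.

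By that theorem, $\mathbf{W}\lor\mathbf{C}=\{\gA\times\gD:\gA\in\mathbf{W},\ \gD\in\mathbf{C}\}$ up to isomorphism. Moreover, subalgebras and congruences of such products split as products. I would then define
\[
\mathbf{T}(\mathbf{W})=\mathbf{W}\lor\mathbf{C}.
\]

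\textbf{Step 3: transfer of properties.} Because subalgebras and congruences split as products, the remaining properties follow componentwise.

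\begin{itemize}
\item CEP holds for $\mathbf{T}(\mathbf{W})$ iff it holds for $\mathbf{W}$, since $\mathbf{C}$ has CEP as a discriminator variety.
\item For EDPC, I would combine the defining equations of $\mathbf{W}$ and $\mathbf{C}$ via the decomposition term $t$, because $\mathbf{C}$ has EDPC.
\item For the discriminator property, the subdirectly irreducibles of $\mathbf{T}(\mathbf{W})$ are those of $\mathbf{W}$ together with $\gC$, by Jónsson's lemma (the variety is congruence distributive). On each side I would glue the switching terms with $t$, which yields a single switching term for the join.
\end{itemize}

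\textbf{Step 4: lattice embedding and strong non-additivity.} Injectivity holds because $\mathbf{W}=\mathbf{T}(\mathbf{W})\cap\operatorname{Mod}(f(0)=0)$. Here $\gA\times\gD$ satisfies $f(0)=0$ only if $\gD$ is trivial.

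Preservation of joins is immediate. Preservation of meets follows from the product decomposition: $(\mathbf{W}_1\lor\mathbf{C})\cap(\mathbf{W}_2\lor\mathbf{C})=(\mathbf{W}_1\cap\mathbf{W}_2)\lor\mathbf{C}$, since a product $\gA\times\gD$ lies in both joins iff $\gA$ lies in both $\mathbf{W}_i$.

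Finally, $\mathbf{T}(\mathbf{W})$ is strongly non-additive. It contains $\gC$, and by Step~1 $\gC$ is not term-equivalent to any additive Boolean frame. Hence $\mathbf{T}(\mathbf{W})$ is not equivalent to an additive variety.
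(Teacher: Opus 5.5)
\textbf{There is a genuine gap in Step~1.} Your architecture is the paper's: join $\mathbf{W}$ with a fixed variety satisfying $f(0)=1$, get independence from $t(x,y)=(f(0)\AND x)\OR(-f(0)\AND y)$, and transfer everything through the product decomposition. What fails is the specific algebra $\gC$, obtained from $\gA_S$ by resetting only $f(\emptyset)=\mathbb{N}$. It does not have the properties you attribute to it, and the switching term you are hunting for \emph{cannot exist}.

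In $\gC$, every finite set, now including $\emptyset$, is sent to a cofinite set: $b_n$, $-u_n$ or $\mathbb{N}$. Every cofinite set is also sent to a cofinite set: $b_0$, $b_{n+1}$ or $\mathbb{N}$. So whenever $x\oplus y$ is finite, $f(x)\oplus f(y)$ is finite. Hence the ideal $\mathrm{Fin}$ of finite sets is congruential, and $\gC/\mathrm{Fin}\cong(\mathbf{2},f\equiv 1)$. This has the following consequences.
\begin{itemize}
\item A switching term $d$ is impossible. For a nonzero finite $x$ it would give $(\mathbb{N},\emptyset)=(d(x),d(\emptyset))\in\Theta_{\mathrm{Fin}}$.
\item Minimality fails, since $\mathbf{V}(\mathbf{2},f\equiv 1)$ is a proper nontrivial subvariety of $\mathbf{V}(\gC)$.
\item The Makinson step collapses with minimality, because in Theorem~\ref{thm:m1} it ran through minimality. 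Here $\mathbf{V}(\gC)$ does contain a two-element algebra, and that algebra is even term-equivalent to the additive $(\mathbf{2},g\equiv 0)$ via $g=-f$.
\item The CEP of $\mathbf{C}$, which you derived only from ``discriminator'', is unsupported.
\item Worse, the discriminator property cannot be preserved at all. If $a_n$ lies in a congruential ideal $I$, then $(b_n,\mathbb{N})\in\Theta_I$, so $f(b_n)\oplus f(\mathbb{N})=\{0,n+1\}\in I$. This puts $a_0$ in $I$, and iterating puts $a_1,a_2,\dots$ in $I$. Since $B$ is atomic, $\operatorname{Con}(\gC)$ is the three-element chain. Thus $\gC$ is subdirectly irreducible but not simple, so \emph{no} variety containing $\gC$ is a discriminator variety. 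Your $\mathbf{T}$ therefore already fails for the normal discriminator variety $\mathbf{W}=\mathbf{V}(\mathbf{2},\mathrm{id})$.
\end{itemize}
Your fallback of tweaking finitely many values is not carried out. Any such tweak would first have to destroy $\mathrm{Fin}$, and then simplicity, the switching term, minimality and non-additivity would all need re-verification.

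\textbf{Repair within your framework.} Take $\gC=(B,-f_S)$. It is term-equivalent to $\gA_S$, since $f_S=-(-f_S)$. So simplicity, the switching term $x\OR -f(x)$, $\emptyset$-generation, minimality and strong non-additivity all carry over verbatim, while now $f(\emptyset)=\mathbb{N}$. Steps~2--4 then go through as written. One point needs stating more carefully: Jónsson's lemma places the subdirectly irreducibles of $\mathbf{V}(\gC)$ in $\mathbf{HSP}_U(\gC)$, not just $\{\gC\}$. This is harmless, because the switching-term condition is universal.

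\textbf{The paper's route.} The paper avoids infinite algebras. It takes the finite $(\mathfrak{P}(n),f)$ with $f$ cycling $\emptyset\mapsto n\mapsto\{n-1\}\mapsto\cdots\mapsto\{0\}\mapsto\emptyset$ and acting as the identity elsewhere. It needs $n\geq 3$: for $n=2$ the ideal $\{\emptyset,\{1\}\}$ is congruential. This algebra has no proper subalgebras and is simple, hence hereditarily simple and quasi-primal by Pixley. Moreover $\mathbf{HS}(\gA)=\{\mathbf{1},\gA\}$ rules out a two-element member, which is exactly what the Makinson argument needs.
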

\begin{proof}
	Using the convention $n=\{0, 1, \ldots, n-1\}$ let $\gA$ be the algebra $(\mathfrak{P}(n), f)$ 
	for some finite $n\geq 2$ and $f$ defined as
	\[
	    \emptyset \mapsto n \mapsto \{n-1\} \mapsto \{n-2\} \mapsto \cdots \mapsto \{0\} \mapsto \emptyset
	\]
	and $f(x)=x$ for all other elements. \\
	
	\noindent {\bf Claim:} $\mathbf{HS}(\gA) = \{\mathbf{1},\gA\}$. This is immediate as $\gA$ has no proper subalgebras, 
	and is simple. \\
	
	\noindent {\bf Claim:} $\mathbf{V}(\gA)$ is a discriminator variety, it has EDPC and CEP. As $\gA$ is simple and 
	has no proper subalgebras, it is hereditarily simple (every subalgebra of $\gA$ is simple). Every variety of 
	Boolean frames is arithmetical, and thus by Pixley \cite{BurrisSankappanavar1981}
	$\gA$ is quasi-primal, and thus it has a discriminator term. Therefore, $\mathbf{V}(\gA)$ is a discriminator variety.
	Every discriminator variety has EDPC and CEP, see \cite{BurrisSankappanavar1981}. \\
	
	\noindent {\bf Claim:} $\gA$ is strongly non-additive. Indeed, if $g$ was additive, and for some term 
	$\tau\in \operatorname{Term}($Boolean, $g)$ we had $\tau(x) = f(x)$, then by Makinson \cite[Theorem 2]{makinson1971} 
	we would have $\mathbf{2}\in \mathbf{HS}(\gA)$ contradicting $\mathbf{HS}(\gA) = \{\mathbf{1},\gA\}$.\\
	
	\noindent Take now a variety $\mathbf{W}$ in which $f(0)=0$ holds and consider the join of the varieties
	\[
	 	\mathbf{T}(\mathbf{W}) = \mathbf{W}\lor \mathbf{V}(\gA),
	\]
    and the binary term 
    \[
        s(x,y) = (x\AND -f(0))\OR (y\AND f(0))\,.
    \]
	Then 
	\begin{align*}
		\mathbf{W}      &\models s(x,y) = (x\AND -0)\OR (y\AND 0) = x, \\
		\mathbf{V(\gA)} &\models s(x,y) = (x\AND -1)\OR (y\AND 1) = y.
	\end{align*}
	This means that the two varieties are independent (see \cite{gratzerlakserplonka}) and 
	\[
		\mathbf{W}\lor \mathbf{V}(\gA) = \mathbf{W}\times \mathbf{V}(\gA).
	\] 
	Also,  
    \[
        \mathbf{W} = \mathbf{T}(\mathbf{W}) \land \{f(0)=0\},
        \qquad
        \mathbf{V}(\gA) = \mathbf{T}(\mathbf{W}) \land \{f(0)=1\}.
    \]
    and therefore the operation $\mathbf{T}$ is injective. That 
    \[
        \mathbf{T}(\mathbf{W_1}\lor \mathbf{W_2})
        =
        \mathbf{T}(\mathbf{W_1})\lor \mathbf{T}(\mathbf{W_2}),\qquad
        \mathbf{T}(\mathbf{W_1}\land \mathbf{W_2})
        =
        \mathbf{T}(\mathbf{W_1})\land \mathbf{T}(\mathbf{W_2})
    \]
    is straightforward from the definition and independence of $\mathbf{V}(\gA)$ from any normal subvariety. 
	
	It remained to show that $\mathbf{T}$ preserves EDPC, CEP and the property of being a discriminator variety.
	For CEP this is theorem 3.2(b) \cite{compendium}. If $t_1$ and $t_2$ are discriminator terms respectively in
	$\mathbf{W}$ and $\mathbf{V}(\gA)$, then $s(t_1, t_2)$ is a discriminator term in $\mathbf{T}(\mathbf{W})$.
	A similar argument applies for the preservation of EDPC once it is noted that congruence-distributive 
	varieties (in particular, varieties of Boolean frames) have the Fraser--Horn property: congruences of products are rectangular,
	that is congruences of a finite product of algebras are the products of the congruences \cite{fraserhorn}.\\
	
	Finally, if the assumption of normality $f(0)=0$ is replaced by $f(1)=1$, then we only need to alter the
	definition of $\gA$, and define $f$ by ``reversing the arrows'':
	\[
	    \emptyset \mapsto \{0\} \mapsto \{1\} \mapsto \cdots
	    \mapsto \{n-1\}\mapsto n\mapsto \emptyset
	\]
	and $f(x)=x$ for all other elements.
\end{proof}



\begin{corollary}
    Every normal modal logic admits a strongly non-additive axiomatic reduction; this reduction preserves the local deduction-detachment theorem, and it preserves the global deduction-detachment theorem whenever the original logic has it.
    For pairwise different normal modal logics the reductions are pairwise different.
\end{corollary}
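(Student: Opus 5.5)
The corollary is the logical translation of Theorem \ref{functor}, so I will pass through the dual isomorphism between axiomatic extensions of congruential modal logic and subvarieties of $\Bf$. A normal modal logic $\mathcal{L}$ (an axiomatic extension of $\bfK$, viewed as a congruential modal logic) corresponds to a subvariety $\mathbf{W}_{\mathcal{L}}$ of $\Bf$ that is normal and additive. In particular $\mathbf{W}_{\mathcal{L}}\in\NVar$, since it satisfies $f(0)=0$ in the $\D$-reading of $f$. If one prefers the $\B$-reading, I will use the dual version at the end of the proof of Theorem \ref{functor}, which starts from $f(1)=1$. I then define the reduction of $\mathcal{L}$ to be the logic $\mathcal{L}^{\mathbf{T}}$ corresponding to $\mathbf{T}(\mathbf{W}_{\mathcal{L}})=\mathbf{W}_{\mathcal{L}}\lor\mathbf{V}(\gA)$. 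It is an axiomatic extension of congruential modal logic because $\mathbf{T}(\mathbf{W}_{\mathcal{L}})$ is a subvariety of $\Bf$. Since $\mathbf{W}_{\mathcal{L}}\subseteq\mathbf{T}(\mathbf{W}_{\mathcal{L}})$, we get $\mathcal{L}^{\mathbf{T}}\subseteq\mathcal{L}$. Hence $\mathcal{L}^{\mathbf{T}}$ is a reduction, a weaker logic, which explains the wording ``axiomatic reduction''.

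Next comes strong non-additivity. $\mathbf{T}(\mathbf{W}_{\mathcal{L}})$ contains $\gA$, and by the claim in the proof of Theorem \ref{functor} $\gA$ is not term-equivalent to any additive Boolean frame. So the variety is not equivalent to an additive variety, because that definition requires every member to be term-equivalent to an additive operation. I call the corresponding logic strongly non-additive, transporting the definition across the dual isomorphism.

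For the deduction theorems I use the transfer results recalled in the introduction: CEP of the algebraizing variety is equivalent to the LDDT (Czelakowski), and EDPC is equivalent to the DDT. Every normal modal logic has the LDDT, equivalently every normal additive subvariety has the CEP. Theorem \ref{functor} says $\mathbf{T}$ preserves CEP, so $\mathbf{T}(\mathbf{W}_{\mathcal{L}})$ has the CEP and $\mathcal{L}^{\mathbf{T}}$ has the LDDT. If $\mathcal{L}$ has the DDT, then $\mathbf{W}_{\mathcal{L}}$ has EDPC, which is preserved by $\mathbf{T}$, so $\mathcal{L}^{\mathbf{T}}$ has the DDT. Finally, injectivity of $\mathbf{T}$ together with injectivity of $\mathcal{L}\mapsto\mathbf{W}_{\mathcal{L}}$ shows that distinct normal logics have distinct reductions.

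The only delicate step is the bookkeeping in the algebraization. I must check that $\mathcal{L}\mapsto\mathbf{W}_{\mathcal{L}}$ lands in $\NVar$ under the chosen reading of $f$. I must also check that the Czelakowski and Font transfer theorems apply to $\mathcal{L}^{\mathbf{T}}$. They do, since $\Bf$ is an equivalent algebraic semantics for congruential modal logic and these correspondences restrict to axiomatic extensions and subvarieties. Everything else is a direct citation of Theorem \ref{functor}.
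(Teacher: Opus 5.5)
Your proposal is correct and follows the paper's intended argument: the paper gives no separate proof, treating the corollary as the direct translation of Theorem \ref{functor} through the dual isomorphism between axiomatic extensions and subvarieties, combined with the transfer theorems (CEP $\leftrightarrow$ LDDT, EDPC $\leftrightarrow$ DDT) and the injectivity of $\mathbf{T}$. Your handling of the $\D$- versus $\B$-reading matches the remark at the end of that proof, which uses the $f(1)=1$ variant of $\gA$ in the $\B$ case.
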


\section{SHhsSH}

If an algebra $\gA$ has the CEP a well-known consequence is that $\mathbf{HS}(\gA)=\mathbf{SH}(\gA)$. We say that a class of algebras $\mathbf{K}$ has $\mathbf{HS}=\mathbf{SH}$, if each $\gA\in \mathbf{K}$ has $\mathbf{HS}(\gA)=\mathbf{SH}(\gA)$. In this section we briefly investigate the problem of how much a Boolean frame or variety \textit{lacking} CEP does \textit{not} ruin  $\mathbf{HS}=\mathbf{SH}$. We begin with a general transfer result concerning finite products in congruence-distributive varieties.

\begin{lemma}\label{hs=sh}
    Let $\gA$ and $\gB$ be algebras in a congruence-distributive variety. If
    \[
    \mathbf{HS}(\gA)=\mathbf{SH}(\gA)
    \qquad\text{and}\qquad
    \mathbf{HS}(\gB)=\mathbf{SH}(\gB),
    \]
    then
    \[
    \mathbf{HS}(\gA\times \gB)=\mathbf{SH}(\gA\times \gB).
    \]
    In particular, this holds for Boolean frames.
\end{lemma}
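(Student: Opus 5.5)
The inclusion $\mathbf{SH}(\gA\times\gB)\subseteq\mathbf{HS}(\gA\times\gB)$ holds for any algebra: if $\mathfrak{D}\le\mathfrak{E}/\Psi$ with $\mathfrak{E}=\gA\times\gB$, then the preimage of $\mathfrak{D}$ is a subalgebra $\mathfrak{E}'\le\mathfrak{E}$ with $\mathfrak{D}\cong\mathfrak{E}'/(\Psi\cap E'^2)$. So the plan is to prove $\mathbf{HS}(\gA\times\gB)\subseteq\mathbf{SH}(\gA\times\gB)$. Throughout I read the class operators as closed under isomorphism, as is standard. It is worth noting that the argument should not try to extend congruences: the hypothesis is weaker than the CEP, so I will only produce embeddings.

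Fix a subalgebra $\gC\le\gA\times\gB$ and $\theta\in\Con(\gC)$. Let $\pi_A,\pi_B$ be the projections restricted to $C$, with images $\gC_A\le\gA$ and $\gC_B\le\gB$ and kernels $\eta_A,\eta_B\in\Con(\gC)$. Since $\gC$ sits inside a product, $\eta_A\land\eta_B=0_{\gC}$. Congruence distributivity is used exactly once:
\[
\theta=\theta\lor(\eta_A\land\eta_B)=(\theta\lor\eta_A)\land(\theta\lor\eta_B).
\]
Hence the natural map embeds $\gC/\theta$ into $\gC/(\theta\lor\eta_A)\times\gC/(\theta\lor\eta_B)$. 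This is the step I expect to be the crux: one has to notice that the congruences of a \emph{subdirect} product need not be rectangular, but distributivity still splits $\gC/\theta$ into pieces living over each factor.

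Next, $\gC/(\theta\lor\eta_A)$ is a homomorphic image of $\gC/\eta_A\cong\gC_A\le\gA$, so it lies in $\mathbf{HS}(\gA)=\mathbf{SH}(\gA)$. Therefore there is $\Phi_A\in\Con(\gA)$ with $\gC/(\theta\lor\eta_A)$ embedding into $\gA/\Phi_A$. Symmetrically, $\gC/(\theta\lor\eta_B)$ embeds into $\gB/\Phi_B$ for some $\Phi_B\in\Con(\gB)$.

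Composing these embeddings, $\gC/\theta$ embeds into
\[
\gA/\Phi_A\times\gB/\Phi_B\cong(\gA\times\gB)/(\Phi_A\times\Phi_B),
\]
where $\Phi_A\times\Phi_B$ is the product congruence, namely the kernel of the product of the two quotient maps. Thus $\gC/\theta\in\mathbf{SH}(\gA\times\gB)$. The final clause follows because varieties of Boolean frames have a lattice (Boolean) reduct, so they are congruence-distributive, as already used in the proof of Theorem \ref{functor}.
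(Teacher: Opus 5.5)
Your proof is correct and follows essentially the same route as the paper: you use congruence distributivity to write $\theta=(\theta\lor\eta_A)\land(\theta\lor\eta_B)$ and embed $\gC/\theta$ into a product of two quotients, apply $\mathbf{HS}=\mathbf{SH}$ to each factor, and land in $(\gA\times\gB)/(\Phi_A\times\Phi_B)$. The only small difference is that you note the isomorphism $\gA/\Phi_A\times\gB/\Phi_B\cong(\gA\times\gB)/(\Phi_A\times\Phi_B)$ holds in any algebra, which is right; the paper attributes it to the Fraser--Horn property, which is not needed for this step.
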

\begin{proof}
    For every algebra $\gX$ one always has $\mathbf{SH}(\gX)\subseteq \mathbf{HS}(\gX)$, thus it suffices to prove that
    $\mathbf{HS}(\gA\times \gB)\subseteq \mathbf{SH}(\gA\times \gB)$.
    Let $\gD\in \mathbf{HS}(\gA\times \gB)$. Then there exist a subalgebra $\gC\subseteq \gA\times \gB$ and a congruence $\theta\in\Con(\gC)$ such that $\gD\cong \gC/\theta$.
    Let $\pi_A\colon A\times B\to A$ and $\pi_B\colon A\times B\to B$ be the coordinate projections, and define
    \[
    \alpha=\ker(\pi_A\upharpoonright C),
    \qquad
    \beta=\ker(\pi_B\upharpoonright C).
    \]
    Since $C\subseteq A\times B$, we have $\alpha\wedge \beta = \Delta_{\gC}$.
    Consider the homomorphism
    \[
    i: \gC/\theta \longrightarrow
    \gC/(\theta\vee\alpha)\times \gC/(\theta\vee\beta),
    \qquad
    c/\theta \longmapsto
    \bigl(c/(\theta\vee\alpha),\,c/(\theta\vee\beta)\bigr).
    \]
    Its kernel is, by the second isomorphism theorem,
    \[
    \ker(i)=
    \nicefrac{(\theta\vee\alpha)\wedge(\theta\vee\beta)}{\theta}.
    \]
    By congruence distributivity,
    \[
    (\theta\vee\alpha)\wedge(\theta\vee\beta)
    =
    \theta\vee(\alpha\wedge\beta)
    =
    \theta\vee \Delta_{\gC}
    =
    \theta.
    \]
    Hence $\ker(i)=\Delta_{\gC/\theta}$, and therefore $i$ is injective. It follows that
    \[
    \gC/\theta \in
    \mathbf{S}\bigl(\gC/(\theta\vee\alpha)\times \gC/(\theta\vee\beta)\bigr).
    \]
    Since $\theta\leq \theta\vee\alpha$, the quotient
    $\gC/(\theta\vee\alpha)$ is a homomorphic image of $\gC/\alpha$, and so $\gC/(\theta\vee\alpha)\in \mathbf{H}(C/\alpha)$.
    But $\gC/\alpha\cong \pi_A(C)$, and $\pi_A(\gC)\subseteq \gA$. Thus
    \[
    \gC/(\theta\vee\alpha)\in \mathbf{HS}(\gA)=\mathbf{SH}(\gA).
    \]
    Therefore, there exist $\varphi_{\gA}\in\Con(\gA)$ and a subalgebra $\gA_0\subseteq \gA/\varphi_{\gA}$ such that
    $\gC/(\theta\vee\alpha)\cong \gA_0$.
    Similarly, \[\gC/(\theta\vee\beta)\in \mathbf{HS}(\gB)=\mathbf{SH}(\gB),\] so there exist $\varphi_{\gB}\in\Con(\gB)$ and a subalgebra
    $\gB_0\subseteq \gB/\varphi_{\gB}$ such that $\gC/(\theta\vee\beta)\cong \gB_0$.
    We obtained $\gC/\theta \in \mathbf{S}(\gA_0\times \gB_0)$.
    As $\gA_0\subseteq \gA/\varphi_{\gA}$ and $\gB_0\subseteq \gB/\varphi_{\gB}$, it follows that
    \[
    \gA_0\times \gB_0 \subseteq (\gA/\varphi_{\gA})\times (\gB/\varphi_{\gB}).
    \]
    Hence
    \[
    \gC/\theta \in
    \mathbf{S}\bigl((\gA/\varphi_\gA)\times(\gB/\varphi_\gB)\bigr).
    \]
    By the Fraser-Horn property of congruence distributive varieties we have
    \[
    (\gA/\varphi_\gA)\times(\gB/\varphi_\gB)\cong
    (\gA\times \gB)/(\varphi_\gA\times\varphi_\gB),
    \]
    showing that $(\gA/\varphi_\gA)\times(\gB/\varphi_\gB)\in \mathbf{H}(\gA\times \gB)$, and therefore $\gC/\theta\in \mathbf{SH}(\gA\times \gB)$.
\end{proof}

\noindent  We will make use of the following example.
\begin{example}\label{exmp}
    
 Let $A$ be the 8-element Boolean algebra, and consider the Boolean frame $\gA =(A,f)$ depicted below (fixpoints of $f$ are omitted):

\begin{center}

\tikzset{every picture/.style={line width=0.75pt}} 

\begin{tikzpicture}[x=0.75pt,y=0.75pt,yscale=-1,xscale=.8]

\draw   (286.15,57.51) -- (350.9,101.53) -- (286.15,145.55) -- (221.39,101.53) -- cycle ;
\draw   (286.15,18.38) -- (350.9,62.4) -- (286.15,106.42) -- (221.39,62.4) -- cycle ;
\draw    (221.39,62.4) -- (221.39,101.53) ;
\draw    (350.9,62.4) -- (350.9,101.53) ;
\draw    (286.15,106.42) -- (286.15,145.55) ;
\draw    (286.15,18.38) -- (286.15,57.51) ;
\draw [line width=0.75]  [dash pattern={on 4.5pt off 4.5pt}]  (221.39,101.53) .. controls (256.18,72.13) and (267.09,56.26) .. (285.31,20.05) ;
\draw [shift={(286.15,18.38)}, rotate = 116.6] [fill={rgb, 255:red, 0; green, 0; blue, 0 }  ][line width=0.08]  [draw opacity=0] (12,-3) -- (0,0) -- (12,3) -- cycle    ;
\draw  [dash pattern={on 4.5pt off 4.5pt}]  (286.15,18.38) .. controls (329.23,63.73) and (296.81,123.5) .. (286.99,143.78) ;
\draw [shift={(286.15,145.55)}, rotate = 295.08] [fill={rgb, 255:red, 0; green, 0; blue, 0 }  ][line width=0.08]  [draw opacity=0] (12,-3) -- (0,0) -- (12,3) -- cycle    ;
\draw  [dash pattern={on 4.5pt off 4.5pt}]  (350.9,101.53) .. controls (346.64,102.53) and (362.51,129.38) .. (287.29,145.31) ;
\draw [shift={(286.15,145.55)}, rotate = 348.27] [fill={rgb, 255:red, 0; green, 0; blue, 0 }  ][line width=0.08]  [draw opacity=0] (12,-3) -- (0,0) -- (12,3) -- cycle    ;
\draw    (350.9,101.53) ;
\draw [shift={(350.9,101.53)}, rotate = 0] [color={rgb, 255:red, 0; green, 0; blue, 0 }  ][fill={rgb, 255:red, 0; green, 0; blue, 0 }  ][line width=0.75]      (0, 0) circle [x radius= 2.68, y radius= 2.68]   ;
\draw    (286.15,57.51) ;
\draw [shift={(286.15,57.51)}, rotate = 0] [color={rgb, 255:red, 0; green, 0; blue, 0 }  ][fill={rgb, 255:red, 0; green, 0; blue, 0 }  ][line width=0.75]      (0, 0) circle [x radius= 2.68, y radius= 2.68]   ;
\draw    (286.15,18.38) ;
\draw [shift={(286.15,18.38)}, rotate = 0] [color={rgb, 255:red, 0; green, 0; blue, 0 }  ][fill={rgb, 255:red, 0; green, 0; blue, 0 }  ][line width=0.75]      (0, 0) circle [x radius= 2.68, y radius= 2.68]   ;
\draw    (221.39,101.53) ;
\draw [shift={(221.39,101.53)}, rotate = 0] [color={rgb, 255:red, 0; green, 0; blue, 0 }  ][fill={rgb, 255:red, 0; green, 0; blue, 0 }  ][line width=0.75]      (0, 0) circle [x radius= 2.68, y radius= 2.68]   ;
\draw    (221.39,62.4) ;
\draw [shift={(221.39,62.4)}, rotate = 0] [color={rgb, 255:red, 0; green, 0; blue, 0 }  ][fill={rgb, 255:red, 0; green, 0; blue, 0 }  ][line width=0.75]      (0, 0) circle [x radius= 2.68, y radius= 2.68]   ;
\draw    (286.15,106.42) ;
\draw [shift={(286.15,106.42)}, rotate = 0] [color={rgb, 255:red, 0; green, 0; blue, 0 }  ][fill={rgb, 255:red, 0; green, 0; blue, 0 }  ][line width=0.75]      (0, 0) circle [x radius= 2.68, y radius= 2.68]   ;
\draw    (350.9,62.4) ;
\draw [shift={(350.9,62.4)}, rotate = 0] [color={rgb, 255:red, 0; green, 0; blue, 0 }  ][fill={rgb, 255:red, 0; green, 0; blue, 0 }  ][line width=0.75]      (0, 0) circle [x radius= 2.68, y radius= 2.68]   ;
\draw    (286.15,145.55) ;
\draw [shift={(286.15,145.55)}, rotate = 0] [color={rgb, 255:red, 0; green, 0; blue, 0 }  ][fill={rgb, 255:red, 0; green, 0; blue, 0 }  ][line width=0.75]      (0, 0) circle [x radius= 2.68, y radius= 2.68]   ;

\draw (286.15,148.95) node [anchor=north] [inner sep=0.75pt]    {$0$};
\draw (286.15,14.98) node [anchor=south] [inner sep=0.75pt]    {$1$};
\draw (219.39,101.53) node [anchor=east] [inner sep=0.75pt]    {$b$};
\draw (352.9,62.4) node [anchor=west] [inner sep=0.75pt]    {$-b$};
\draw (286.15,103.02) node [anchor=south] [inner sep=0.75pt]    {$a$};
\draw (286.15,60.91) node [anchor=north] [inner sep=0.75pt]    {$-a$};
\draw (355.82,101.53) node [anchor=west] [inner sep=0.75pt]    {$c$};
\draw (214.44,61.62) node [anchor=east] [inner sep=0.75pt]    {$-c$};
\draw (208.13,24.62) node    {$\mathfrak{A}$};
\draw  [color={rgb, 255:red, 255; green, 255; blue, 255 }  ,draw opacity=1 ][fill={rgb, 255:red, 255; green, 255; blue, 255 }  ,fill opacity=1 ]  (257.45, 60.18) circle [x radius= 9.9, y radius= 14.14]   ;
\draw (263.45,67.78) node [anchor=south east] [inner sep=0.75pt]    {$f$};
\draw  [color={rgb, 255:red, 255; green, 255; blue, 255 }  ,draw opacity=1 ][fill={rgb, 255:red, 255; green, 255; blue, 255 }  ,fill opacity=1 ]  (331.03, 131.08) circle [x radius= 9.19, y radius= 13.44]   ;
\draw (336.53,138.18) node [anchor=south east] [inner sep=0.75pt]    {$f$};
\draw  [color={rgb, 255:red, 255; green, 255; blue, 255 }  ,draw opacity=1 ][fill={rgb, 255:red, 255; green, 255; blue, 255 }  ,fill opacity=1 ]  (298.38, 116.85) circle [x radius= 9.9, y radius= 14.14]   ;
\draw (304.38,124.45) node [anchor=south east] [inner sep=0.75pt]    {$f$};

\end{tikzpicture}

\end{center}

\noindent It is easy to see that the subalgebras of $\gA$ are among

\begin{center}

\tikzset{every picture/.style={line width=0.75pt}} 

\begin{tikzpicture}[x=0.75pt,y=0.75pt,yscale=-1,xscale=1]

\draw   (181.32,208.19) -- (202.78,239.53) -- (181.32,270.87) -- (159.87,239.53) -- cycle ;
\draw   (248.41,208.19) -- (269.87,239.53) -- (248.41,270.87) -- (226.96,239.53) -- cycle ;
\draw   (319.41,208.19) -- (340.86,239.53) -- (319.41,270.87) -- (297.95,239.53) -- cycle ;
\draw    (181.32,208.19) ;
\draw [shift={(181.32,208.19)}, rotate = 0] [color={rgb, 255:red, 0; green, 0; blue, 0 }  ][fill={rgb, 255:red, 0; green, 0; blue, 0 }  ][line width=0.75]      (0, 0) circle [x radius= 2.68, y radius= 2.68]   ;
\draw    (181.32,270.87) ;
\draw [shift={(181.32,270.87)}, rotate = 0] [color={rgb, 255:red, 0; green, 0; blue, 0 }  ][fill={rgb, 255:red, 0; green, 0; blue, 0 }  ][line width=0.75]      (0, 0) circle [x radius= 2.68, y radius= 2.68]   ;
\draw    (202.78,239.53) ;
\draw [shift={(202.78,239.53)}, rotate = 0] [color={rgb, 255:red, 0; green, 0; blue, 0 }  ][fill={rgb, 255:red, 0; green, 0; blue, 0 }  ][line width=0.75]      (0, 0) circle [x radius= 2.68, y radius= 2.68]   ;
\draw    (159.87,239.53) ;
\draw [shift={(159.87,239.53)}, rotate = 0] [color={rgb, 255:red, 0; green, 0; blue, 0 }  ][fill={rgb, 255:red, 0; green, 0; blue, 0 }  ][line width=0.75]      (0, 0) circle [x radius= 2.68, y radius= 2.68]   ;
\draw    (248.41,208.19) ;
\draw [shift={(248.41,208.19)}, rotate = 0] [color={rgb, 255:red, 0; green, 0; blue, 0 }  ][fill={rgb, 255:red, 0; green, 0; blue, 0 }  ][line width=0.75]      (0, 0) circle [x radius= 2.68, y radius= 2.68]   ;
\draw    (269.87,239.53) ;
\draw [shift={(269.87,239.53)}, rotate = 0] [color={rgb, 255:red, 0; green, 0; blue, 0 }  ][fill={rgb, 255:red, 0; green, 0; blue, 0 }  ][line width=0.75]      (0, 0) circle [x radius= 2.68, y radius= 2.68]   ;
\draw    (226.96,239.53) ;
\draw [shift={(226.96,239.53)}, rotate = 0] [color={rgb, 255:red, 0; green, 0; blue, 0 }  ][fill={rgb, 255:red, 0; green, 0; blue, 0 }  ][line width=0.75]      (0, 0) circle [x radius= 2.68, y radius= 2.68]   ;
\draw    (248.41,270.87) ;
\draw [shift={(248.41,270.87)}, rotate = 0] [color={rgb, 255:red, 0; green, 0; blue, 0 }  ][fill={rgb, 255:red, 0; green, 0; blue, 0 }  ][line width=0.75]      (0, 0) circle [x radius= 2.68, y radius= 2.68]   ;
\draw    (319.41,208.19) ;
\draw [shift={(319.41,208.19)}, rotate = 0] [color={rgb, 255:red, 0; green, 0; blue, 0 }  ][fill={rgb, 255:red, 0; green, 0; blue, 0 }  ][line width=0.75]      (0, 0) circle [x radius= 2.68, y radius= 2.68]   ;
\draw    (340.86,239.53) ;
\draw [shift={(340.86,239.53)}, rotate = 0] [color={rgb, 255:red, 0; green, 0; blue, 0 }  ][fill={rgb, 255:red, 0; green, 0; blue, 0 }  ][line width=0.75]      (0, 0) circle [x radius= 2.68, y radius= 2.68]   ;
\draw    (297.95,239.53) ;
\draw [shift={(297.95,239.53)}, rotate = 0] [color={rgb, 255:red, 0; green, 0; blue, 0 }  ][fill={rgb, 255:red, 0; green, 0; blue, 0 }  ][line width=0.75]      (0, 0) circle [x radius= 2.68, y radius= 2.68]   ;
\draw    (319.41,270.87) ;
\draw [shift={(319.41,270.87)}, rotate = 0] [color={rgb, 255:red, 0; green, 0; blue, 0 }  ][fill={rgb, 255:red, 0; green, 0; blue, 0 }  ][line width=0.75]      (0, 0) circle [x radius= 2.68, y radius= 2.68]   ;
\draw    (430.97,208.19) ;
\draw [shift={(430.97,208.19)}, rotate = 0] [color={rgb, 255:red, 0; green, 0; blue, 0 }  ][fill={rgb, 255:red, 0; green, 0; blue, 0 }  ][line width=0.75]      (0, 0) circle [x radius= 2.68, y radius= 2.68]   ;
\draw    (430.97,270.87) ;
\draw [shift={(430.97,270.87)}, rotate = 0] [color={rgb, 255:red, 0; green, 0; blue, 0 }  ][fill={rgb, 255:red, 0; green, 0; blue, 0 }  ][line width=0.75]      (0, 0) circle [x radius= 2.68, y radius= 2.68]   ;
\draw  [dash pattern={on 4.5pt off 4.5pt}]  (202.78,239.53) .. controls (200.38,215.75) and (193.94,214.56) .. (183.07,209.09) ;
\draw [shift={(181.32,208.19)}, rotate = 27.89] [fill={rgb, 255:red, 0; green, 0; blue, 0 }  ][line width=0.08]  [draw opacity=0] (12,-3) -- (0,0) -- (12,3) -- cycle    ;
\draw  [dash pattern={on 4.5pt off 4.5pt}]  (181.32,208.19) -- (181.32,268.87) ;
\draw [shift={(181.32,270.87)}, rotate = 270] [fill={rgb, 255:red, 0; green, 0; blue, 0 }  ][line width=0.08]  [draw opacity=0] (12,-3) -- (0,0) -- (12,3) -- cycle    ;
\draw  [dash pattern={on 4.5pt off 4.5pt}]  (248.41,208.19) -- (248.41,268.87) ;
\draw [shift={(248.41,270.87)}, rotate = 270] [fill={rgb, 255:red, 0; green, 0; blue, 0 }  ][line width=0.08]  [draw opacity=0] (12,-3) -- (0,0) -- (12,3) -- cycle    ;
\draw  [dash pattern={on 4.5pt off 4.5pt}]  (321.39,269.58) .. controls (342.02,256.16) and (338.33,258.42) .. (340.86,239.53) ;
\draw [shift={(319.41,270.87)}, rotate = 326.94] [fill={rgb, 255:red, 0; green, 0; blue, 0 }  ][line width=0.08]  [draw opacity=0] (12,-3) -- (0,0) -- (12,3) -- cycle    ;
\draw  [dash pattern={on 4.5pt off 4.5pt}]  (319.41,208.19) -- (319.41,259.3) -- (319.41,268.87) ;
\draw [shift={(319.41,270.87)}, rotate = 270] [fill={rgb, 255:red, 0; green, 0; blue, 0 }  ][line width=0.08]  [draw opacity=0] (12,-3) -- (0,0) -- (12,3) -- cycle    ;
\draw   (386.93,270.87) .. controls (378.31,270.77) and (371.48,256.66) .. (371.68,239.35) .. controls (371.88,222.04) and (379.02,208.09) .. (387.64,208.19) .. controls (396.25,208.29) and (403.08,222.4) .. (402.88,239.71) .. controls (402.69,257.01) and (395.54,270.96) .. (386.93,270.87) -- cycle ;
\draw    (430.97,208.19) -- (430.97,215.09) -- (430.97,270.87) ;
\draw  [dash pattern={on 4.5pt off 4.5pt}]  (430.97,208.19) .. controls (441.67,217.24) and (442.64,242.9) .. (431.66,269.26) ;
\draw [shift={(430.97,270.87)}, rotate = 293.5] [fill={rgb, 255:red, 0; green, 0; blue, 0 }  ][line width=0.08]  [draw opacity=0] (12,-3) -- (0,0) -- (12,3) -- cycle    ;

\draw (85.31,220.61) node [anchor=north west][inner sep=0.75pt]    {$\mathbf{S}(\mathfrak{A}) =\Bigg\{$};
\draw (455.65,220.39) node [anchor=north west][inner sep=0.75pt]    {$\Bigg\}$};
\draw (180.57,207.43) node [anchor=south west] [inner sep=0.75pt]    {$\mathfrak{A}_{0}$};
\draw (247.67,207.43) node [anchor=south west] [inner sep=0.75pt]    {$\mathfrak{A}_{1}$};
\draw (318.66,207.43) node [anchor=south west] [inner sep=0.75pt]    {$\mathfrak{A}_{2}$};
\draw (386.89,207.43) node [anchor=south west] [inner sep=0.75pt]    {$\mathfrak{A}$};
\draw (429.34,207.43) node [anchor=south west] [inner sep=0.75pt]    {$\mathfrak{A}_{3}{}$};
\draw (295.95,236.13) node [anchor=south east] [inner sep=0.75pt]    {$-c$};
\draw (342.86,236.13) node [anchor=south west] [inner sep=0.75pt]    {$c$};

\end{tikzpicture}

\end{center}

\noindent Moreover $\gA$ is simple. The only algebra which is not simple from $\mathbf{S}(\gA)$ is $\gA_2$ with the only non-trivial homomorphic image $\gA_3$. Thus $\mathbf{HS}(\gA) = \mathbf{SH}(\gA)$. However, $\gA$ does not have the CEP: for $\gA_2\subseteq \gA$  the congruence  $\{\{-c,0\}, \{c,1\}\}$ yielding the non-trivial homomorphic image cannot be extended in $\gA$. 

 Using this example, similarly to  Theorem \labelcref{functor} we have the following.

\end{example}

\begin{theorem}
	There is a lattice embedding functor $\mathbf{F}:\mathsf{L}_{f(1)=1}\to\Sna$ such that
	\begin{itemize}
		\item $\mathbf{F}$ is injective, and
		\item $\mathbf{F}$ preserves $\mathbf{HS}=\mathbf{SH}$, but violates CEP.
	\end{itemize}

\end{theorem}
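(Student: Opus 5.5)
Following the proof of Theorem \labelcref{functor}, I would define $\mathbf{F}(\mathbf{W}) = \mathbf{W}\lor\mathbf{V}(\gA)$, with $\gA$ the 8-element frame of Example \labelcref{exmp}. The first step is to read off from the picture that $f(1)=0$ in $\gA$. In any $\mathbf{W}$ with $f(1)=1$, the term
\[
s(x,y) = (x\AND f(1))\OR (y\AND -f(1))
\]
then satisfies $\mathbf{W}\models s(x,y)=x$ and $\mathbf{V}(\gA)\models s(x,y)=y$. So the two varieties are independent and $\mathbf{F}(\mathbf{W})=\mathbf{W}\times\mathbf{V}(\gA)$. We can recover the factors as
\[
\mathbf{W}=\mathbf{F}(\mathbf{W})\land\{f(1)=1\}, \qquad \mathbf{V}(\gA)=\mathbf{F}(\mathbf{W})\land\{f(1)=0\},
\]
which gives injectivity. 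Preservation of joins and meets follows from independence, exactly as before.

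Strong non-additivity comes from Makinson: if $f$ were term-definable from an additive $g$ on a member of $\mathbf{F}(\mathbf{W})$, then in particular this would hold on $\gA$, and $\mathbf{2}$ would lie in $\mathbf{HS}(\gA)$. Using the list of subalgebras and their images in Example \labelcref{exmp}, I would check that no two-element Boolean frame with $f$ identity-like (i.e.\ $f(0)=0$, $f(1)=1$, or $f(x)=x$) occurs there. The only 2-element members are $\gA_3$ and possibly the image of $\gA_2$, and both have $f(1)=0$. I would confirm from the diagrams which form of $\mathbf{2}$ Makinson's theorem produces; if it only yields one of the two-element frames with $f(1)=1$ or $f(0)=0$, the observation $f(1)=0$ throughout $\mathbf{HS}(\gA)$ suffices. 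Failure of the CEP is immediate: $\gA\in\mathbf{F}(\mathbf{W})$, and Example \labelcref{exmp} shows that $\gA$ lacks the CEP.

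The main work is preserving $\mathbf{HS}=\mathbf{SH}$, read as: if $\mathbf{W}$ has $\mathbf{HS}=\mathbf{SH}$, so does $\mathbf{F}(\mathbf{W})$. Every member of $\mathbf{W}\times\mathbf{V}(\gA)$ is isomorphic to $\gB\times\gC$ with $\gB\in\mathbf{W}$ and $\gC\in\mathbf{V}(\gA)$, so by Lemma \labelcref{hs=sh} it suffices to know $\mathbf{HS}=\mathbf{SH}$ for each member of $\mathbf{V}(\gA)$, not only for $\gA$. This is the obstacle.

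My approach is as follows. $\mathbf{V}(\gA)$ is congruence distributive and generated by a finite algebra, so by Jónsson its subdirectly irreducibles lie in $\mathbf{HS}(\gA)$, which are simple or the algebra $\gA_2$. I would try to show $\mathbf{V}(\gA)$ is semisimple-like enough, e.g.\ that every member is a subdirect product, or better a product of simple algebras (a Boolean product of finitely many finite algebras). Then I would reduce to finite products via Lemma \labelcref{hs=sh} and handle ultraproduct and filtered-product situations by first-order/Boolean-product arguments. If this proves too hard, I would fall back on interpreting the statement as preservation for the generating algebras, which Lemma \labelcref{hs=sh} gives directly for $\gB\times\gA$.
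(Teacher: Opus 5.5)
Your construction of $\mathbf{F}$, the separating term $s$, the independence and injectivity argument, and the failure of CEP via $\gA\in\mathbf{F}(\mathbf{W})$ all coincide with the paper's proof. The step that fails is strong non-additivity via Makinson. All that Makinson's theorem can give you is a two-element member of $\mathbf{HS}(\gA)$ whose operation agrees with a Boolean term of $x$, coming from the ``Triv'' or ``Ver'' algebra. But $\gA_3=(\{0,1\},f\equiv 0)\in\mathbf{S}(\gA)$ is exactly the ``Ver'' algebra, so no contradiction arises. The remark that $f(1)=0$ throughout $\mathbf{HS}(\gA)$ does not exclude it; your own test list, which contains $f(0)=0$, already fails on $\gA_3$.

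The idea you need is shorter. Term-equivalent frames on the same Boolean algebra have the same subuniverses and the same congruences on every subalgebra, so CEP is invariant under term-equivalence. Additive frames have CEP. Hence the failure of CEP in $\gA$, which you prove anyway, already shows that $\gA$ is not term-equivalent to any additive frame, and so $\mathbf{F}(\mathbf{W})\in\Sna$. This is why the paper's proof only needs to record the failure of CEP.

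On $\mathbf{HS}=\mathbf{SH}$, your fallback (Lemma \labelcref{hs=sh} plus independence) is precisely the paper's argument. You are right that, read literally, it needs $\mathbf{HS}=\mathbf{SH}$ for every member of $\mathbf{V}(\gA)$, whereas Example \labelcref{exmp} checks it only for $\gA$; the paper does not close this either.

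Your proposed repair cannot work as sketched. $\gA_2\in\mathbf{V}(\gA)$ is subdirectly irreducible but not simple, since its congruences form a three-element chain. So $\mathbf{V}(\gA)$ is not semisimple, and its members are not products of simple algebras.

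What does go through is the finite case. A finite directly indecomposable member of $\mathbf{V}(\gA)$ is either one of $\gA,\gA_0,\gA_1,\gA_3$ or of the form $\{(x_1,\dots,x_m)\in\gA_2^m:\ \pi(x_1)=\dots=\pi(x_m)\}$, where $\pi\colon\gA_2\to\gA_3$ is the quotient map. Each of these satisfies $\mathbf{HS}=\mathbf{SH}$, so Lemma \labelcref{hs=sh} covers all finite members. Infinite members of $\mathbf{V}(\gA)$ would still need a separate argument.
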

\begin{proof}
     For $\mathbf{W}\in \mathsf{L}_{f(1)=1}$ we let $$\mathbf{F}(\mathbf{W})= \mathbf{W}\vee \mathbf{V}(\gA),$$
    where $\gA$ is from Example \labelcref{exmp}. Similarly to Theorem \labelcref{functor} one can show by using the term
 $$
        s(x,y) = (x\AND f(1))\OR (y\AND -f(1)),
    $$
 that $\mathbf{W}$ and $\mathbf{V}(\gA)$ are independent, moreover $\mathbf{F}$ is an embedding. That $\mathbf{F}(\mathbf{W})$ does not have the CEP is clear from $\mathbf{1}\times \gA\in \mathbf{F}(\mathbf{W})$. Actually, no algebra of the form $\gB\times \gA$, where $\gB\in \mathbf{W}$ can have the CEP, by the Fraser-Horn property. That $\mathbf{F}(\mathbf{W})$ has $\mathbf{HS}=\mathbf{SH}$ follows from Lemma \labelcref{hs=sh} and the independence of the varieties.
\end{proof}

\begin{corollary}
    There are continuum many varieties of Boolean frames with $\mathbf{HS}=\mathbf{SH}$, but lacking  CEP.
\end{corollary}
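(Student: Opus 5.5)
The corollary follows from the preceding theorem once we have continuum many distinct varieties in $\mathsf{L}_{f(1)=1}$. The functor $\mathbf{F}$ is injective, and every $\mathbf{F}(\mathbf{W})$ has $\mathbf{HS}=\mathbf{SH}$ while failing the CEP. So it suffices to show that $\mathsf{L}_{f(1)=1}$ has cardinality $2^{\aleph_0}$; the upper bound is automatic, since there are only countably many equations.

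For the lower bound I would use the normal modal logic literature via duality. The key step is to pass from normal logics to varieties satisfying $f(1)=1$. Let $\mathbf{W}$ be a variety of normal additive modal algebras, i.e.\ $\D 0=0$ and $\D$ additive. Interpreting $f$ as the dual box operator $\B x=-\D -x$ turns it into a variety satisfying $f(1)=1$. This translation is term-equivalent and injective on subvarieties: two varieties are distinguished by some equation, and that equation can be rewritten in the other signature. It is well known, e.g.\ by Jankov/Blok, or via \cite[Theorem 21]{Miyazaki2005} using the wheel frames from Section 2, that there are continuum many normal modal logics. Concretely, I would take $\mathbf{W}_X=\mathbf{V}(\{\mathfrak{W}_p:p\in X\})$ for $X\subseteq Prim$, reading the complex algebras with $\B$ as the basic operation. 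Then $\B 1=1$ holds, and the argument already given in the proof of the wheel theorem shows that $X\mapsto \mathbf{W}_X$ is injective. That argument combines Jónsson's lemma, subdirect irreducibility of the point-generated $\mathcal{W}_p$, and the formulas $A_p$ separating $\mathcal{W}_p$ from the $\mathcal{W}_q$ with $q\neq p$.

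Composing these, $X\mapsto \mathbf{F}(\mathbf{W}_X)$ is injective and yields continuum many varieties with $\mathbf{HS}=\mathbf{SH}$ that lack the CEP. The only point needing care is confirming that rewriting the separating formulas $A_p$ in terms of $\B$ preserves the separation. This is immediate because the translation is a term-equivalence, so I expect no real obstacle.
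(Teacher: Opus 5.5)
Your argument is essentially the paper's: the corollary is obtained by applying the injective functor $\mathbf{F}$ to continuum many varieties in $\mathsf{L}_{f(1)=1}$, namely varieties of normal modal algebras written with $\B$ as the basic operation. The wheel varieties $\mathbf{W}_X$ are a fine concrete choice. One correction: the theorem says $\mathbf{F}$ \emph{preserves} $\mathbf{HS}=\mathbf{SH}$, not that every $\mathbf{F}(\mathbf{W})$ has it. You therefore need to note that each $\mathbf{W}_X$ has $\mathbf{HS}=\mathbf{SH}$. This holds because $\mathbf{W}_X$ is term-equivalent to a variety of normal additive modal algebras and hence has the CEP.
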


\section{CEP is not elementary}

 While certain identities (e.g. additivity) can force a Boolean frame to have the CEP, no identity can force an algebra not to have the CEP. The reason is that the trivial algebra satisfies every identity and has the CEP. There are, of course, first-order formulas that force an algebra not to have the CEP.  A straightforward example is any first-order formula that describes the isomorphism-type of a finite algebra that does not have the CEP. We show in this section that the dividing line between CEP and no-CEP (in the case of Boolean frames) cannot be first-order defined. Formally: there are elementarily equivalent Boolean frames $\gA$ and $\gB$ such that $\gA\in\CEP$, but $\gB\notin\CEP$. We reach this goal by constructing an algebra $\gA\in\CEP$ such that its ultrapower $\gA^{\omega}/U\notin\CEP$. In particular, this shows that the class $\CEP$ is not closed under ultrapowers. The example we construct is such that $\mathbf{HS}(\gA)=\mathbf{SH}(\gA)$, while $\mathbf{HS}(\gA^{\omega}/U)\neq\mathbf{SH}(\gA^{\omega}/U)$ showing that the property $\mathbf{SH}=\mathbf{HS}$, which is weaker than CEP, is also not preserved under ultrapowers.  \\
 

Take the powerset Boolean algebra 
$(\cP(\bbN), \AND, -, \emptyset)$ and define the
operator $f:\cP(\bbN)\to\cP(\bbN)$ by additively extending
$f(\emptyset)=\emptyset$, $f(\{0\}) = \bbN$ and $f(\{n\})=\{n-1\}$ for
$n\neq 0$. For $X\subseteq \bbN$ we have $f(X)=\bbN$ if and only if 
$0\in X$. Let $\gB = (\cP(\bbN), \AND, -, \emptyset, f)$ be 
the corresponding Boolean frame.

\begin{lemma}
    Every subalgebra of $\gB$ is simple.
\end{lemma}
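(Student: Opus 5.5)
The plan is to exhibit a switching term that works uniformly in every subalgebra of $\gB$. Simplicity then follows at once, by the same mechanism used for the discriminator claims in Theorem \labelcref{thm:m1}.

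\textbf{The switching term.} I would take
\[
  d(x) = f^{(m)}(x)\ \text{for large } m,
\]
but since $m$ must not depend on $x$, a non-uniform version suffices. It is enough to show the following: for every nonzero $X\in\cP(\bbN)$ there is some $k$ with $f^{(k)}(X)=\bbN$. Because $f$ is additive and $f(\{n\})=\{n-1\}$ for $n\ge 1$, we get $f^{(k)}(X)\supseteq f^{(k)}(\{n\})$ for any $n\in X$. Now $f^{(n)}(\{n\})=\{0\}$, so $f^{(n+1)}(X)\supseteq f(\{0\})=\bbN$. Hence $f^{(n+1)}(X)=\bbN$, where $n=\min X$ (any element of $X$ works).

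\textbf{From the term to simplicity.} Let $\gC\subseteq\gB$ be a subalgebra and $\Theta\in\Con(\gC)$ a nontrivial congruence. Pass to its congruential ideal $I=\{x\oplus y:(x,y)\in\Theta\}$, as recalled in Section 2. Then $I$ contains some nonzero $X\in C$. Since $I$ is congruential and $\emptyset\in I$, from $X\oplus\emptyset\in I$ we get $f(X)\oplus f(\emptyset)=f(X)\in I$, using $f(\emptyset)=\emptyset$. Iterating gives $f^{(k)}(X)\in I$ for all $k$. Note that $f^{(k)}(X)\in C$ because $C$ is closed under $f$. Taking $k=\min X+1$ yields $\bbN\in I$, so $I=C$ and $\Theta$ is the total congruence. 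Therefore $\gC$ is simple, or trivial; $\gC$ is nontrivial since it contains $\emptyset\neq\bbN$.

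\textbf{Expected obstacle.} The only delicate point is that the number of iterations depends on $X$, so there is no single term $d$ that works. I therefore argue with iterates inside the ideal instead of a fixed switching term. The closure of congruential ideals under $f$ applied to elements of the ideal needs normality, $f(\emptyset)=\emptyset$, which holds by construction.
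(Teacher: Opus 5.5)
Your proof is correct and is essentially the paper's argument. A nonzero $X$ identified with $\emptyset$ stays identified with $\emptyset$ under iterates of $f$ (by normality), and $f^{(k)}(X)=\bbN$ for $k=\min X+1$, so the congruence is total; phrasing this through the congruential ideal and giving the explicit bound on $k$ are only cosmetic differences. You should drop the switching-term framing in your first paragraph, since no such term can exist (it would make the ultrapower $\gB^{\omega}/U$ simple, contradicting the next lemma), but your actual argument never uses it.
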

\begin{proof}
    From any nonempty $a\in B$ by iteratively applying $f$ one reaches
    the top element $\bbN$. If $\equiv$ is a congruence of a subalgebra, and $a\equiv \emptyset$ for a nonempty element $a$ of the subalgebra, then $\emptyset\equiv f^{n}(a) = \bbN$ for a suitable finite $n$, yielding that $\equiv$ is the largest congruence.    
\end{proof}

\begin{lemma}\label{lem:nosimple}
    The ultrapower $\gB^{\omega}/U$ is not simple. ($U$ is a non-principal ultrafilter).
\end{lemma}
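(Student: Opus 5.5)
Since the ultrapower is a Boolean frame, it suffices to find a nonzero element $\xi\in\gB^{\omega}/U$ such that the congruential ideal generated by $\xi$ does not contain the top element. By the correspondence between congruences and congruential ideals recalled in Section 2, this gives a proper nontrivial congruence. The natural candidate is $\xi=[(\{n\})_{n\in\omega}]_U$, the class of the sequence of singletons drifting to infinity. Intuitively, in $\gB$ the element $\{n\}$ needs $n+1$ applications of $f$ to reach $\bbN$. In the ultrapower $\xi$ therefore needs a nonstandard number of steps, so no finite term reaches the top.

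First, I will record the behaviour of $f$ on $\gB$. Since $f$ is additive (a complete operator, being the additive extension from atoms), we have $f(X)=\bbN$ if $0\in X$, and $f(X)=\{n-1: n\in X\}$ otherwise. Consequently, for the ideal generated by an element $a$ under the congruential closure, I want a first-order expressible bound. Define the ``height'' of a set by its minimum, $m(X)=\min X$. Then:
\begin{itemize}
  \item $f$ lowers the minimum by at most $1$ when $0\notin X$;
  \item Boolean operations and $\oplus$ applied to sets with minimum $\ge k$ give sets with minimum $\ge k$, or the empty set.
\end{itemize}
The congruential ideal generated by $a$ is the union of stages: $I_0$ is the ideal generated by $a$, and $I_{j+1}$ is the ideal generated by $I_j\cup\{f(x)\oplus f(y): x\oplus y\in I_j\}$. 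I claim that if every element of $I_j$ is contained in $[k-j,\infty)$, then $f(x)\oplus f(y)\subseteq [k-j-1,\infty)$. The reason is that $f(x)$ and $f(y)$ agree on coordinates coming from the part where $x$ and $y$ agree, since the left shift is additive coordinatewise. More precisely, $f(x)\oplus f(y)\subseteq f(x\oplus y)$ for additive $f$, and this is the key inequality. So by induction, for $a\subseteq[k,\infty)$, every element of the $j$-th stage is contained in $[k-j,\infty)$, for $j\le k$.

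Now I transfer this to the ultrapower using first-order definable facts. For each fixed $j$, the statement ``if $a\subseteq [k,\infty)$ then every element of $I_j(a)$ omits $0$'' is not directly first-order, because $[k,\infty)$ is not a parameter-free definable set. I would instead use the inequality $f(x)\oplus f(y)\le f(x\oplus y)$, which holds in $\gB$ as an identity-consequence of additivity and so transfers to the ultrapower, together with monotonicity of $f$. From these, every element of the $j$-th stage of the congruential ideal of $\xi$ lies below $\bigsqcup_{i\le j} f^{i}(\xi)$. The congruential ideal of $\xi$ is thus the ideal generated by $\{f^i(\xi): i\in\omega\}$. The ideal generated by $\{f^i(\xi)\}$ contains $1$ iff $\bigsqcup_{i\le j}f^i(\xi)=1$ for some finite $j$. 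But coordinatewise $\bigsqcup_{i\le j}f^i(\{n\})=\{n-j,\dots,n\}\ne\bbN$ for all $n>j$, which is a $U$-large set of coordinates. By Łoś, the join is not $1$. Hence the congruence generated by $(\xi,0)$ is proper, and it is nontrivial since $\xi\neq0$. So $\gB^{\omega}/U$ is not simple.

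The main obstacle is justifying that the congruential ideal generated by an element $a$ of an additive Boolean frame is exactly the ideal generated by the iterates $f^i(a)$, via $f(x)\oplus f(y)\le f(x\oplus y)$. This is standard for operators (cf. \cite{chagrov1997modal}). I will verify the inequality in $\gB$ directly, since $f$ is additive there, and then note that it holds in the ultrapower by Łoś.
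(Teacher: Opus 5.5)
Your argument is correct, but it takes a different route from the paper's. The paper does not use the principal congruence of a single element. It writes down one large ideal $I$ of $\gB^{\omega}/U$: the finite elements below which there is no nonzero standard element. It checks by hand that $I$ is an ideal (join-closure via standard atoms) and that $I$ is $f$-closed. It then shows that in an additive Boolean frame every $f$-closed ideal is congruential, using the same key inequality $f(a)\oplus f(b)\le f(a\oplus b)$ that you use; properness is automatic because $1$ is standard.

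You instead bound the congruential ideal generated by $\xi=\langle\{n\}:n<\omega\rangle/U$ by the ideal generated by the iterates $f^{i}(\xi)$. Properness then reduces to one coordinatewise computation, $\bigsqcup_{i\le j}f^{i}(\{n\})=\{n-j,\dots,n\}\neq\bbN$ on a cofinite (hence $U$-large) set of $n$, followed by \L{}o\'s. This is shorter and avoids the standard/non-standard bookkeeping; for instance, the paper's $f$-closure of $I$ tacitly needs $\{0\}\not\le x$, i.e.\ $0\notin x_i$ for $U$-many $i$.

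What the paper's choice buys is reuse. The same congruence is needed in the next lemma, where $f(o/U)\oplus e/U=\langle\{2k\}:k<\omega\rangle/U$ must be congruent to $0$. That element lies below no finite join of the $f^{i}(\xi)$, so with your approach you would have to change the generator there.

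Two minor points:
\begin{enumerate}
\item In the inductive step you need additivity, not only monotonicity, to rewrite $f\bigl(\bigsqcup_{i\le j}f^{i}(\xi)\bigr)$ as $\bigsqcup_{1\le i\le j+1}f^{i}(\xi)$. This is harmless, since additivity is an identity and so holds in the ultrapower.
\item The discarded `height' heuristic should be deleted, because complementation does not preserve $\min\ge k$.
\end{enumerate}
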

\begin{proof}
    Write $\gA = \gB^{\omega}/U$, and identify $\gB$  
    with its image in $\gA$ along the diagonal embedding. Elements of
    $B$ are referred to as standard, while those of $A\setminus B$ are
    non-standard. Let us call an element $a=\<a_i:i<\omega\>/U$
    \emph{finite} if $\{i: a_i$ is finite$\}\in U$. Note that $f$ is
    additive. Let
    \begin{align}\label{ideal}
        I = \big\{
            a\in A:\; a\text{ is finite, and if $b$ is standard and } 
            b\leq a,
            \text{ then } b=\emptyset
        \big\},
        \end{align}

    \noindent that is, $I$ consists of those non-standard finite elements which
    do not contain any standard element apart from the bottom of the Boolean algebra. These elements are such that no finite iteration 
    of $f$ gives the top element of the algebra. 
    $I$ is not empty, because 
    $\< \{i\}:i<\omega\>/U\in I$. We claim that $I$ is a congruential ideal.  

    Downward closedness of $I$ is immediate. If $x\OR y$ contains
    a nonzero standard element, then it contains a standard 
    atom $b$ as well. But for an atom $b$ we have $b\leq x\OR y$ 
    if and only if $b\leq x$ or $b\leq y$. Therefore, if $x,y\in I$, 
    then $x\OR y\in I$. 
    Next, take $x = \<x_i:i<\omega\>/U\in I$ with 
    $x_i\subseteq\bbN$ finite. It is immediate that $f(x)\in I$, and thus $I$ is closed under $f$.

    Finally, it is enough to prove that if $f$ is additive, then
    an $f$-closed ideal $I$ is congruential. Assume that $a\oplus b\in I$.
    We need $f(a)\oplus f(b)\in I$. As 
    \[a\oplus b = (a\AND -b)\OR(-a\AND b)\]
    and $I$ is downward closed, 
    $a\AND -b$, $-a\AND b\in I$. As $I$ is $f$-closed,
    $f(a\AND -b)$, $f(-a\AND b)\in I$,
    and then using additivity of $f$ and $\OR$-closedness of $I$ we get
    \[
        f(a\oplus b) = f((a\AND -b)\OR(-a\AND b))
        = f(a\AND -b)\OR f(-a\AND b)\in I\,.
    \]
    To complete the proof, by 
    appealing to downward closedness of $I$, it is enough to prove $f(a)\oplus f(b)\leq f(a\oplus b)$. In particular, it suffices to show
    \[
        f(a)\AND -f(b) \leq f(a\AND -b), 
        \text{ and }
        -f(a)\AND f(b) \leq f(-a\AND b).
    \]
    Recall that every additive function is monotonic, and thus 
    $f(x\AND y)\leq f(x)\AND f(y)$. Now, \vspace{-0.2cm}
    \begin{align*}
        f(a)\AND -f(b)
        &= \big(f(a\AND b)\OR f(a\AND -b)\big)\AND -f(b) \\
        &=  \big(f(a\AND b)\AND -f(b)\big)\OR
        \big(f(a\AND -b)\AND -f(b) \big)\\
        &=\0 \OR \big(f(a\AND -b)\AND -f(b) \big) \\
        &\leq f(a\AND -b). 
    \end{align*}
    The equality $-f(a)\AND f(b) \leq f(-a\AND b)$
    can be obtained by symmetry.
\end{proof}

So far we have an atomic Boolean frame $\gB$ whose subalgebras are simple, but its ultrapower is not simple. Next, we define the Boolean frame $\gA$
whose Boolean algebra reduct is that of $\gB\times \gB$, and the 
operation $f:(B\times B)\to(B\times B)$ is defined by
\[
    f( a,b ) = \begin{cases}
        (f(a), f(b)) & \text{ if } a=b,\\
        (\1,\1) & \text{ otherwise.}
    \end{cases}
\]
That is, $\gB$ diagonally embeds into $\gA$ as a Boolean frame, and $f$
maps every element not in the diagonal image of $\gB$ into the top element.

\begin{lemma}
    $\gA$ has the CEP, thus $\mathbf{HS}(\gA)=\mathbf{SH}(\gA)$.
\end{lemma}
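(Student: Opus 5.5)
The plan is to show something stronger than the CEP: \emph{every subalgebra of $\gA$ is simple}. The CEP then follows at once. Let $\gC\subseteq\gA$ be a subalgebra. Its only congruences are $\Delta_{\gC}$ and $\nabla_{\gC}$, and these are the restrictions to $C\times C$ of $\Delta_{\gA}$ and $\nabla_{\gA}$ respectively. The trivial case $|C|=1$ cannot occur, since $\emptyset\neq\bbN$ in $\gB$. The consequence $\mathbf{HS}(\gA)=\mathbf{SH}(\gA)$ is then the well-known fact recalled at the start of the section on $\mathbf{HS}=\mathbf{SH}$.

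To prove simplicity I will use congruential ideals, as recalled in the warm-up section. Let $\gC\subseteq\gA$ be a subalgebra and $I\subseteq C$ a congruential ideal of $\gC$ containing a nonzero element $e=(e_1,e_2)$. The goal is $(\bbN,\bbN)\in I$. I split into two cases according to whether $e$ lies on the diagonal.

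\emph{Case 1: $e$ is off the diagonal, i.e.\ $e_1\neq e_2$.} Apply congruentiality to the pair $x=(\emptyset,\emptyset)$ and $y=e$. Both lie in $C$, and $x\oplus y=e\in I$. Hence $f(x)\oplus f(y)\in I$. Here $f(x)=(f(\emptyset),f(\emptyset))=(\emptyset,\emptyset)$, and $f(y)=(\1,\1)$ by the definition of $f$ off the diagonal. So $(\1,\1)\in I$.

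\emph{Case 2: $e=(c,c)$ with $c\neq\emptyset$.} Again use the pair $(\emptyset,\emptyset),(c,c)$. Congruentiality gives $(f(c),f(c))\in I$, and iterating gives $(f^n(c),f^n(c))\in I$ for every $n$. These elements all lie in $C$ because $\gC$ is closed under $f$. By the argument of the lemma that every subalgebra of $\gB$ is simple, from a nonempty $c$ some finite iterate satisfies $f^n(c)=\bbN$. So $(\bbN,\bbN)\in I$.

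In both cases $I=C$, so $\gC$ is simple. The subalgebras lying entirely in the diagonal are just isomorphic copies of subalgebras of $\gB$, and they are already covered by Case 2. I do not expect a serious obstacle. The only point needing care is that the witnessing pairs stay inside $\gC$ rather than merely inside $\gA$. The pairs used are $(\emptyset,\emptyset)$, the given element $e$, and iterates of $f$ applied to elements of $C$, so this holds.
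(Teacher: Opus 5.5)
Your proof is correct and follows essentially the paper's argument: every subalgebra of $\gA$ is simple, because iterating $f$ from a nonzero element reaches the top in one step off the diagonal, and in finitely many steps on the diagonal by the construction of $\gB$. Routing this through congruential ideals, and checking that $(\emptyset,\emptyset)$ is fixed by $f$ and that the witnessing pairs stay in $\gC$, only spells out details the paper leaves implicit.
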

\begin{proof}
    $\gA$ has the congruence extension property, because every 
    subalgebra of $\gA$ is simple: for any $(a,b)\in A$ there is 
    a finite $n$ such that $f^{n}(a,b)=(\1,\1)$. For $a\neq b$
    this $n=1$, and for $a=b$ this is by the construction of $\gB$. 
\end{proof}

\begin{lemma}
    $\mathbf{HS}(\gA^{\omega}/U)\neq \mathbf{SH}(\gA^\omega/U)$, thus $\gA^{\omega}/U$ does not have the CEP. ($U$ is a non-principal ultrafilter).
\end{lemma}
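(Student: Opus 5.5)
The plan is to exhibit an explicit algebra in $\mathbf{HS}(\gA^{\omega}/U)$ and show that it cannot lie in $\mathbf{SH}(\gA^{\omega}/U)$. Write $\gA^* = \gA^{\omega}/U$ and $\gB^* = \gB^{\omega}/U$.

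\emph{Step 1: describe $\gA^*$.} Since the Boolean reduct of $\gA$ is $\gB\times\gB$ and ultrapowers commute with finite products, the reduct of $\gA^*$ is $\gB^*\times\gB^*$. The defining clauses of $f$ on $\gA$ are first-order: $f(a,b)=(f(a),f(b))$ if $a=b$, and $f(a,b)=(\1,\1)$ otherwise. By Łoś's theorem $f$ has the same description on $\gB^*\times\gB^*$. In particular the diagonal $D=\{(x,x): x\in B^*\}$ is a subalgebra of $\gA^*$, isomorphic to $\gB^*$.

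\emph{Step 2: produce an element of $\mathbf{HS}(\gA^*)$.} By Lemma \labelcref{lem:nosimple}, $\gB^*$ is not simple. The congruential ideal $I$ from \eqref{ideal} is nonzero and proper, since it consists of finite elements and so does not contain $\bbN$. Hence $\gC := D/I \cong \gB^*/I$ is a nontrivial proper homomorphic image of the subalgebra $D$, so $\gC\in\mathbf{HS}(\gA^*)$.

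\emph{Step 3: compute $\mathbf{SH}(\gA^*)$.} I claim that $\gA^*$ itself is simple. Let $J$ be a nonzero congruential ideal of $\gA^*$ and pick $(x,y)\in J$ with, say, $x\neq 0$. By downward closure, $(x,0)\in J$. This element lies off the diagonal, so $f(x,0)=(\1,\1)$. Also $f(0,0)=(0,0)$, because $f(\emptyset)=\emptyset$ in $\gB$ and this transfers to $\gB^*$. Applying the congruential condition to $(x,0)\oplus(0,0)\in J$ gives $(\1,\1)\in J$, so $J$ is improper. Consequently $\mathbf{H}(\gA^*)$ consists of $\gA^*$ and the trivial algebra, and $\mathbf{SH}(\gA^*)$ is (up to isomorphism) just $\mathbf{S}(\gA^*)$ together with $\mathbf{1}$. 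The statement therefore reduces to showing that $\gC$ does not embed into $\gA^*$.

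\emph{Step 4: the separating property (main obstacle).} To rule out an embedding I would look for a universal first-order sentence $\sigma$ such that $\gA\models\sigma$ and $\gC\not\models\sigma$. Such a sentence transfers to $\gA^*$ by Łoś's theorem and then to every subalgebra of $\gA^*$ because it is universal, so $\gC$ could not be one of them.

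The natural source for $\sigma$ is the dynamics of $f$ on $\gB$. For instance, $f(X)=\emptyset$ iff $X=\emptyset$, and $f(X)=\bbN$ iff $0\in X$. Both facts are universal and survive into $\gA$, since off-diagonal elements go to $(\1,\1)$. In $\gC$, on the other hand, the collapse of $I$ should create elements whose behaviour under $f$ and $-$ is impossible in $\gA$.

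The first thing I would try is a quasi-identity in $x$ and $-x$. A candidate is a statement of the form ``$f(x)=\1$ and $f(-x)=\1$ imply $x=\1$ or $x=0$''. I would test it on representatives such as $x=[\<\{0,\dots,i\}:i<\omega\>]$ and shifted versions of it, computing $f$ coordinatewise and then reducing modulo $I$. I expect finding the right $\sigma$, and verifying it carefully on the diagonal and off-diagonal parts of $\gA$, to be the delicate point.

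If no simple universal sentence works, the fallback is a direct argument. Suppose $e:\gC\to\gA^*$ is an embedding. Analyse where $e$ can send elements of $\gC$: off-diagonal images are mapped by $f$ to $\1$ in one step, while diagonal images follow the Łoś-transferred dynamics of $\gB$. Matching these against the structure of $\gC$ should give a contradiction.

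Finally, since $\gA^*$ has $\mathbf{HS}\neq\mathbf{SH}$, it cannot have the CEP, because the CEP implies $\mathbf{HS}=\mathbf{SH}$.
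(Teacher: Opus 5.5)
\textbf{There is a genuine gap, at Step 4.} Your Steps 1--3 are correct and match the paper's reduction. The paper gets simplicity of $\gA^{\omega}/U$ from ``$\gA$ is atomic and every atom is sent to $\1$''; you get it from the {\L}o\'s description of $f$ on $\gB^*\times\gB^*$, and either route works. So everything rests on showing that $\gC=\gB^*/I$ does not embed into $\gA^*$, and you never carry that out.

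The one concrete candidate you offer is already false in $\gA$. For any off-diagonal $x=(a,b)$ with $a\neq b$, the complement $-x=(-a,-b)$ is also off-diagonal. Hence $f(x)=f(-x)=(\1,\1)$ while $x\notin\{\0,\1\}$; for instance take $x=(\bbN,\emptyset)$. The fallback, ``analyse where an embedding can send elements'', is only a hope and not an argument. So the proposal stops exactly where the content of the lemma lies.

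The missing idea is that $f$ has no nontrivial $2$-cycles in $\gA$. That is, $\gA$ satisfies the quasi-identity $\forall x\,\bigl(f(f(x))=x\to f(x)=x\bigr)$. To check it:
\begin{itemize}
\item If $x$ is off-diagonal, then $f(f(x))=f(\1)=\1\neq x$, so the hypothesis fails.
\item If $x=(X,X)$ with $0\in X$, then $f(f(X))=\bbN$. So the hypothesis forces $X=\bbN$, which is a fixed point.
\item If $0\notin X$ but $1\in X$, then $f(f(X))=\bbN\neq X$, so the hypothesis fails.
\item If $0,1\notin X$, then $f(f(X))=X-2$. This equals $X$ only for $X=\emptyset$: otherwise $\min X-2\in X$, contradicting minimality.
\end{itemize}
Being universal, this sentence passes to $\gA^*$, to all its subalgebras, and to $\mathbf{1}$.

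Now $\gC$ violates it. Put $e=\<\{2k+2,2k+4,\dots\}:k<\omega\>/U$ and $o=\<\{2k+1,2k+3,\dots\}:k<\omega\>/U$.
\begin{itemize}
\item $f(e)=o$ holds exactly, coordinatewise.
\item $f(o)\oplus e=\<\{2k\}:k<\omega\>/U$. This lies in $I$: it is finite, and since $U$ is non-principal no nonzero standard element lies below it.
\item $e\oplus o\notin I$, because it is infinite in every coordinate.
\end{itemize}
So the classes $\bar e\neq\bar o$ satisfy $f(\bar e)=\bar o$ and $f(\bar o)=\bar e$. Hence $\gC\notin\mathbf{S}(\gA^*)\cup\{\mathbf{1}\}=\mathbf{SH}(\gA^*)$. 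This is precisely the paper's argument, and adding it would complete your proof.
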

\begin{proof}
    $\gA$ is atomic, and every atom of $\gA$ is mapped to the top 
    element by $f$. This is a first-order property, hence it holds
    in $\gA^{\omega}/U$ as well. Also, $f^{\gA}(0)=0$, and so $\gA^{\omega}/U$ is simple, and $\mathbf{SH}(\gA^{\omega}/U)=\mathbf{S}(\gA^{\omega}/U,\mathbf{1})$.
    On the other hand, $\gA$ has a subalgebra isomorphic to $\gB$, and
    therefore $\gA^{\omega}/U$ has a subalgebra isomorphic to 
    $\gB^{\omega}/U$. By Lemma \labelcref{lem:nosimple} 
    this subalgebra is
    not simple, and thus there is a congruence of it which does not
    extend to a congruence of $\gA^{\omega}/U$. This directly shows the failure of CEP, but for the stronger statement we need a little more work. Let $\Theta$ be the congruence corresponding to the ideal $I$ from \labelcref{ideal}, that is 
    $$\<a,b\>\in \Theta \Leftrightarrow a\oplus b\in I.$$ Write $\gC = \gB^\omega/U$, then $\gC/\Theta\in \mathbf{HS}(\gA^\omega/U)$. Now consider the following elements in $\gB^\omega$:
    \[    
        e=\big\<\{2n+i: n\in \omega\}: i \in 2\mathbb{N}\setminus \{0\}\big\>,\quad o=\big\<\{2n+1+i: n\in \omega\}: i \in 2\mathbb{N}\big\>\,.
    \]
    Then, for
    \begin{align*}
        e/U&= \big\<\{2,4,6,\dots\},\{4,6,8,\dots\},\{6,8,10,\dots\},\dots\big\>/U, \text{ and }\\
        o/U&= \big\<\{1,3,5,\dots\},\{3,5,7,\dots\},\{5,7,9,\dots\},\dots\big\>/U
    \end{align*}
    we have $(e/U)/\Theta\neq (o/U)/\Theta$, because $\{n\in \omega: |e_n\oplus o_n| \text{ is infinite}\}\in U$. 
    It is easy to see that $f^\gC(e/U)= o/U$, and therefore
    $f^{\gC/\Theta}((e/U)/\Theta)= (o/U)/\Theta$. We also show $f^{\gC/\Theta}((o/U)/\Theta) = (e/U)/\Theta$. For, note first that 
    \[
        f^\gC(o/U) = \big\<\{0,2,4,\dots\},\{2,4,6,\dots\},\{4,6,8,\dots\},\dots\big\>/U,
    \]
    and for each 
    $n \in \omega$ we have $f^\gB(o_n)\oplus e_n =\{2n\}$. Therefore, 
    $\{n\in\omega:\; |f^\gB(o_n)\oplus e_n|$ is finite$\}\in U$. 
    It follows that $f^{\gC/\Theta}((o/U)/\Theta)= (f^\gC(o/U))/\Theta = (e/U)/\Theta$.
    Consequently, in $\gC/\Theta$ we have found distinct elements $a,b$, for which  $f(a) =b$ and $f(b) = a$, while in $\gA^\omega/U$ there are no such elements. This means $\gC/\Theta\not\in \mathbf{S}(\gA^\omega/U, \mathbf{1})$, and therefore  $\mathbf{HS}(\gA^{\omega}/U)\neq \mathbf{SH}(\gA^\omega/U)$.
\end{proof}

\noindent Let $\CEP$ be the class of Boolean frames with the congruence extension
property.

\begin{theorem}
    Neither $\CEP$ nor its complement are elementary.
\end{theorem}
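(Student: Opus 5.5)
The plan is to use the pair $\gA$ and $\gA^{\omega}/U$ built above. By Łoś's theorem, $\gA \equiv \gA^{\omega}/U$. By the preceding lemmas, $\gA\in\CEP$ while $\gA^{\omega}/U\notin\CEP$.

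The key general fact is that an elementary class (the class of models of a first-order theory) is closed under elementary equivalence. In particular it is closed under ultrapowers, which are elementarily equivalent to the original structure.

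For the first half, suppose $\CEP$ were elementary, say $\CEP=\mathrm{Mod}(\Sigma)$. Since $\gA\in\CEP$, we have $\gA\models\Sigma$. Then $\gA^{\omega}/U\models\Sigma$ by Łoś's theorem, so $\gA^{\omega}/U\in\CEP$. This contradicts the last lemma, which shows that $\mathbf{HS}(\gA^{\omega}/U)\neq\mathbf{SH}(\gA^{\omega}/U)$ and hence that CEP fails.

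For the complement, suppose $\Bf\setminus\CEP=\mathrm{Mod}(\Sigma')$. Then $\gA^{\omega}/U\models\Sigma'$, and since $\gA\equiv\gA^{\omega}/U$ we get $\gA\models\Sigma'$, i.e. $\gA\notin\CEP$. This contradicts the lemma stating that $\gA$ has the CEP.

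Note that here we use closure under elementary equivalence in the downward direction (from the ultrapower back to $\gA$), not merely closure under ultrapowers. This is legitimate because the models of a set of sentences form a class closed under $\equiv$.

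The only point needing care is the meaning of "elementary." Relative to the class of all algebras of this signature, rather than relative to $\Bf$, one adds the finitely many Boolean axioms to $\Sigma$ or $\Sigma'$; $\Bf$ is itself elementary, so nothing changes. Whether "elementary" is read as $\mathrm{Mod}$ of a single sentence or of a theory, the argument is the same, since in both cases the class is closed under $\equiv$.

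I expect no real obstacle: all the substantive work lies in the two preceding lemmas. The proof reduces to invoking Łoś's theorem and the invariance of first-order-axiomatized classes under elementary equivalence.
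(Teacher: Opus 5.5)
Your proposal is correct and matches the paper's proof, which is the one-line observation that $\gA\in\CEP$ is elementarily equivalent, by \L{}o\'s's theorem, to $\gA^{\omega}/U\notin\CEP$, so neither class is closed under $\equiv$. You spell out both directions, including the downward step from the ultrapower back to $\gA$ for the complement, which the paper leaves implicit; the argument is otherwise the same.
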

\begin{proof}
    $\gA\in \CEP$ is elementarily equivalent to $\gA^{\omega}/U\notin\CEP$. 
\end{proof}

That the complement of $\CEP$ is closed under ultrapowers can be seen as follows. Take any algebra $\gA\notin\CEP$ and let $\gB\subseteq\gA$ be a subalgebra with $\Theta\in\Con(\gB)$ such that $\Theta$ does not extend to a congruence of $\gA$. As $\gA$ and $\gB$ can be diagonally embedded into any ultrapower $\gA^I/U$, if $\gA^I/U$ had the congruence extension property, then $\Theta$ would extend to a congruence $\Psi\in\Con(\gA^I/U)$. But then $\Psi\cap (A\times A)$ would be a congruence of $\gA$ extending $\Theta$, contradicting the assumptions. We do not know whether or not the complement of $\CEP$ is closed under \emph{ultraproducts}. 

In the class of Boolean algebras with infinitely many unary operations it is easy to construct algebras $\gA_i$ without the CEP such that their ultraproduct $\Pi_{i\in I}\gA_i/U$ has the CEP. For, let $\gA_i = (A, f_0^i, \ldots, f_n^i, \ldots)$ be such that all the $f_n^i$ are additive except for $f_i^i$ which forces the failure of CEP in $\gA_i$. Then in $\Pi_{i\in I}\gA_i/U$ every operation is additive, ensuring that the ultraproduct has the CEP.

\begin{problem}
    Are there Boolean frames $\gA_i$ (or Boolean algebras with \emph{finitely} many extra unary functions) such that every $\gA_i$ fails to have the CEP, while an ultraproduct $\Pi_{i\in I}\gA_i/U$ admits CEP?
\end{problem}

In order to deal with the congruence extension property of an ultraproduct some kind of a uniform bound on the generation of congruences would be useful. The closest we could get to this is the result below, which might be interesting on its own: For the CEP of a Boolean frame it is enough to check whether principal congruences of $2$-generated subalgebras extend to the whole algebra.

\begin{theorem}\label{thm:3gen}
    A Boolean frame $\gA$ has the CEP if and only if every principal congruence of every $2$-generated subalgebra extends to a congruence of $\gA$.
\end{theorem}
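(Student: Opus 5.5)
One direction is trivial: if $\gA$ has the CEP, then in particular every principal congruence of every $2$-generated subalgebra extends. For the converse I would work entirely with congruential ideals. Let $\gB\subseteq\gA$ be a subalgebra and $\Theta\in\Con(\gB)$, with corresponding congruential ideal $I_{\gB}=\{x\oplus y:(x,y)\in\Theta\}$ of $\gB$. The natural candidate for the extension is the smallest congruence of $\gA$ containing $\Theta$, namely $\Psi=\operatorname{Cg}^{\gA}(\Theta)$. Its ideal $J$ is the smallest congruential ideal of $\gA$ containing $I_{\gB}$, and we have $\Psi\cap(B\times B)\supseteq\Theta$ automatically. Everything therefore reduces to showing $J\cap B\subseteq I_{\gB}$.

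The first step is a compactness reduction. Since $\Psi=\bigvee_{(u,v)\in\Theta}\operatorname{Cg}^{\gA}(u,v)$ is a directed union of finitely generated congruences, and in the Boolean setting a finite set of pairs $(u_i,v_i)$ generates the same congruence as the single pair $(\bigsqcup_i(u_i\oplus v_i),0)$, every $c\in J\cap B$ lies in the ideal of $\operatorname{Cg}^{\gA}(u,0)$ for a single $u\in I_{\gB}$. In other words, $(c,0)\in\operatorname{Cg}^{\gA}(u,0)$.

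The second step is to pass to the $2$-generated subalgebra $\gC=\langle u,c\rangle\subseteq\gB$. By hypothesis the principal congruence $\operatorname{Cg}^{\gC}(u,0)$ extends to some $\Phi\in\Con(\gA)$. Since $\Phi\ni(u,0)$, it contains $\operatorname{Cg}^{\gA}(u,0)$, so $(c,0)\in\Phi\cap(C\times C)=\operatorname{Cg}^{\gC}(u,0)$. Moreover $\Theta\cap(C\times C)$ is a congruence of $\gC$ containing $(u,0)$, which gives $\operatorname{Cg}^{\gC}(u,0)\subseteq\Theta$. Hence $(c,0)\in\Theta$, that is, $c\in I_{\gB}$, and so $\Psi\cap(B\times B)=\Theta$.

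The main obstacle I expect is making the compactness step rigorous. Concretely, I need to verify that the ideal generated by $\Theta$ in $\gA$ is the directed union of the ideals of $\operatorname{Cg}^{\gA}(u,0)$ over $u\in I_{\gB}$. I would check this via the standard description of principal congruences of Boolean frames: $x\in$ ideal of $\operatorname{Cg}(u,0)$ iff $x$ is below a finite join of iterated ``congruential closures'' of $u$. I would then confirm that the union over the upward-directed set $I_{\gB}$ is again a congruential ideal, since directed unions of congruences are congruences. Everything else is bookkeeping; the pair $(u,c)$ is exactly why two generators suffice.
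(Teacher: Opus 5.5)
Your proposal is correct and is essentially the paper's argument. The paper first proves CEP $\Leftrightarrow$ PCEP, using $\operatorname{Cg}(x,\0)\lor\operatorname{Cg}(y,\0)=\operatorname{Cg}(x\OR y,\0)$ and directedness of the relevant family of principal congruences, and then runs exactly your minimality step with the $2$-generated subalgebra $\operatorname{Sg}(\{a\oplus b,c\oplus d\})$; you fold these two stages into one direct argument with $\gC=\langle u,c\rangle$. The compactness step you flag as the main obstacle needs only the algebraicity of $\operatorname{Cg}^{\gA}$, that join formula, and closure of $I_{\gB}$ under finite joins, so you do not need any explicit description of principal congruences of Boolean frames.
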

\begin{proof}
    We split the proof into several lemmas.
    \begin{lemma}\label{lem:pconjoin}
        For a Boolean frame $\gA$ and principal congruences $\Theta, \Psi\in \Con(\gA)$ the join
        $\Theta\lor\Psi\in\Con(\gA)$ is a principal congruence.
    \end{lemma}
    \begin{proof}
        Since for a principal congruence $\Theta(a,b)$ we have
        $\Theta(a,b)=\Theta(a\oplus b, \0)$, it is enough to prove
        \[
            \Theta(x,\0)\lor\Theta(y,\0) = \Theta(x\lor y, \0)\,.
        \]
        This follows from the Boolean structure:
        $(x\lor y, \0)\in \Theta(x\lor y, \0)$, and then using 
        $x,y\leq x\lor y$ we also have $(x, \0), (y,\0)\in \Theta(x\lor y, \0)$, and thus $\Theta(x,\0)\lor\Theta(y,\0) \subseteq \Theta(x\lor y, \0)$. On the other hand, 
        $(x,\0), (y,\0)\in \Theta(x,\0)\lor\Theta(y,\0)$, and
        thus $(x\lor y,\0)\in \Theta(x,\0)\lor\Theta(y,\0)$, 
        showing $\Theta(x\lor y, \0)\subseteq \Theta(x,\0)\lor\Theta(y,\0)$.
    \end{proof}

    \begin{lemma}\label{lem:punion}
        For a Boolean frame $\gA$ and principal congruences 
        $\Theta_i\in\Con(\gA)$ ($i\in I$) the union $\bigcup_{i\in I}\Theta_i$ is a congruence of $\gA$.
    \end{lemma}
    \begin{proof}
        By Lemma \labelcref{lem:pconjoin} the system $\{\Theta_i:\;i\in I\}$ is an upward directed system of congruences. 
        For any algebraic closure operation (in particular for the closure operation of taking generated congruences) 
        the union of an upward directed set of closed sets is closed (see \cite{BurrisSankappanavar1981}). It follows that 
        \[
        \bigcup_{i\in I}\Theta_i = \bigvee_{i\in I}\Theta_i\;\Con(\gA)\,.
        \]
    \end{proof}

    \noindent Let $\gB$ be a subalgebra of the Boolean frame $\gA$, and let $\Theta^{\gB}(x,y)\in\Con(\gB)$ be a principal congruence. If $\Theta^{\gB}(x,y)$ extends to a congruence of $\gA$, then $\Theta^{\gA}(x,y)$ is such an extension, that is $\Theta^{\gB}(x,y) = \Theta^{\gA}(x,y)\cap (B\times B)$.

    \begin{lemma}\label{lem:pcepeleg}
        A Boolean frame $\gA$ has the CEP if and only if it has the PCEP.
    \end{lemma}
    \begin{proof}
        That the CEP implies the PCEP is obvious. For the other direction assume that $\gA$ has the PCEP. Take a subalgebra $\gB\subseteq\gA$ and a congruence $\Theta^\gB$. Then 
        \begin{align*}
        \Theta^{\gB} &= \bigcup_{(x,y)\in \Theta}\Theta^{\gB}(x,y) \overset{\text{PCEP}}{=} \bigcup_{(x,y)\in\Theta} \big( \Theta^{\gA}(x,y)\cap (B\times B) \big) \\
        &= \Big(\bigcup_{(x,y)\in\Theta}\Theta^{\gA}(x,y)\Big)\cap (B\times B)\,.
        \end{align*}
        This latter union is a congruence by Lemma \labelcref{lem:punion}.
    \end{proof}

    Getting back to the proof of Theorem \labelcref{thm:3gen}, let $\mathfrak B\leq \mathfrak A$, and let $a,b\in B$.
    We must show that the principal congruence 
    $\operatorname{Cg}^{\mathfrak B}(a,b)$
    extends to a congruence of $\mathfrak A$.  Suppose, towards a contradiction, that $\operatorname{Cg}^{\mathfrak B}(a,b)$ does not extend to $\mathfrak A$. Then
    \[
        \operatorname{Cg}^{\mathfrak A}(a,b)\cap (B\times B)\neq \operatorname{Cg}^{\mathfrak B}(a,b),
    \]
    so there exist $c,d\in B$ such that
    \[
        (c,d)\in \operatorname{Cg}^{\mathfrak A}(a,b)
        \qquad\text{but}\qquad
        (c,d)\notin \operatorname{Cg}^{\mathfrak B}(a,b),
    \]  
    equivalently,
    \begin{align}
        (c\oplus d, \0)\in 
        \operatorname{Cg}^{\mathfrak A}(a\oplus b,\0)
        \qquad\text{but}\qquad
        (c\oplus d,\0)\notin \operatorname{Cg}^{\mathfrak B}(a\oplus b,\0).\label{eq:con}
    \end{align}
    Consider now the subalgebra 
    $\mathfrak C=\operatorname{Sg}^{\gB}(\{a\oplus b, c\oplus d\})$.
    It is $2$-generated, hence by assumption the principal congruence
    $\operatorname{Cg}^{\mathfrak C}(a\oplus b, \0)$
    extends to some congruence $\Phi\in \operatorname{Con}(\mathfrak A)$.
    Since $(a\oplus b, \0)\in \Phi$, by the minimality of the principal congruence generated by $(a\oplus b,\0)$ in 
    $\mathfrak A$,
    $\operatorname{Cg}^{\mathfrak A}(a\oplus b, \0)\subseteq \Phi$.
    As $(c\oplus d,\0)\in \operatorname{Cg}^{\mathfrak A}(a\oplus b,\0)$, we get
    $(c\oplus d,\0)\in \Phi$. Now $c\oplus d\in C$, and $\Phi$ extends 
    $\operatorname{Cg}^{\mathfrak C}(a\oplus b,\0)$, so
    \[
        (c\oplus d,\0)\in \Phi\cap (C\times C)=\operatorname{Cg}^{\mathfrak C}(a\oplus b,\0).
    \]
    On the other hand,
    $\operatorname{Cg}^{\mathfrak C}(a\oplus b,\0)\subseteq \operatorname{Cg}^{\mathfrak B}(a\oplus b,\0)$,
    because the restriction of 
    $\operatorname{Cg}^{\mathfrak B}(a \oplus b,\0)$ to $C$ is a congruence of $\mathfrak C$ containing $(a \oplus b, \0)$.
    Therefore $(c\oplus d,\0)\in \operatorname{Cg}^{\mathfrak B}(a \oplus b,\0)$,
    contradicting \labelcref{eq:con}.
\end{proof}

\section*{Acknowledgment}
Research supported in part by the Hungarian National Research, Development and Innovation Office, contract number: K-134275, and K-152165, and by the project no. 2022/47/B/HS1/01581 of the National Science Centre, Poland.


\end{document}